\documentclass[a4j,11pt]{article}
\usepackage{amsmath,amsthm,amssymb,amscd,ascmac}
\usepackage{graphicx, mathrsfs}
\theoremstyle{definition}
\numberwithin{equation}{section}
\newtheorem{thm}{Theorem}[section]
\newtheorem{prop}[thm]{Proposition}
\newtheorem{lem}[thm]{Lemma}
\newtheorem{cor}[thm]{Corollary}

\allowdisplaybreaks[4]


\newcommand{\ad}{\mathrm{ad}\,}

\theoremstyle{definition}

\newtheorem{exa}[thm]{Example}

\usepackage{bm}

\begin{document}
\title{Raising type twisted Pieri formulas for Jack polynomials and their applications to interpolation Jack polynomials}
\author{Genki Shibukawa}
\date{
\small MSC classes\,:\,05E05, 11B68, 33C67, 43A90}
\pagestyle{plain}

\maketitle

\begin{abstract}
We propose new Pieri type formulas for Jack polynomials, which is another kind of Pieri type formulas than the ones in the previous paper (G.\,Shibukawa, arXiv:2004.12875). 
From these new Pieri type formulas, we give yet another proof of difference and Pieri formulas for interpolation Jack polynomials. 
Further, we also generalize a falling type twisted Pieri formula to the binomial type polynomials including Jack and multivariate Bernoulli polynomials. 
\end{abstract}

\section{Introduction}

Let $r$ be a positive integer and $d\not=0$ be a complex parameter. 
We denote the symmetric group of degree $r$ by $\mathfrak{S}_{r}$ and partition set of the length $\leq r$ by 
\begin{align}
\mathcal{P}
   &:=
   \{\mathbf{m}=(m_{1},\ldots,m_{r}) \in \mathbb{Z}^{r} \mid m_{1}\geq \cdots \geq m_{r} \geq 0\}. \nonumber 
\end{align}
For any partition $\mathbf{m}=(m_{1},\ldots,m_{r})\in \mathcal{P}$ and the variables $\mathbf{z}=(z_{1},\ldots,z_{r})$, the {\it{Jack polynomials}} $P_{\mathbf{m}}\left(\mathbf{z};\frac{d}{2}\right)$ are a family of homogeneous symmetric polynomials defined by the following two conditions \cite{M}, \cite{St}, \cite{VK}: 
\begin{align}
{\rm{(1)}} \,& 
D(\mathbf{z})P_{\mathbf{m}}\left(\mathbf{z};\frac{d}{2}\right)
   =
   P_{\mathbf{m}}\left(\mathbf{z};\frac{d}{2}\right)
   \sum_{j=1}^{r}m_{j}\left(m_{j}-1+d(r-j)\right), \nonumber \\
{\rm{(2)}} \,& 
P_{\mathbf{m}}\left(\mathbf{z};\frac{d}{2}\right)
   =
   \sum_{\mathbf{k}\leq \mathbf{m}}c_{\mathbf{m}\mathbf{k}}m_{\mathbf{k}}(\mathbf{z}), \quad c_{\mathbf{m}\mathbf{k}} \in \mathbb{Q}(d), \quad  c_{\mathbf{m}\mathbf{m}}=1. \nonumber 
\end{align}
Here $D(\mathbf{z})$ is the second-order differential operator 
$$
D(\mathbf{z})
   :=
   \sum_{j=1}^{r}
   z_{j}^{2}\partial_{z_{j}}^{2}
      +d 
      \sum_{1\leq j\not= l \leq r}
         \frac{z_{j}^{2}}{z_{j}-z_{l}}
         \partial_{z_{j}},
$$
$m_{\mathbf{k}}(\mathbf{z})$ is the monomial symmetric polynomial
$$
m_{\mathbf{k}}(\mathbf{z})
   :=
   \sum_{\mathbf{n} \in \mathfrak{S}_{r}\mathbf{k}}z^{\mathbf{n}} 
$$
and $\mathbf{k}\leq \mathbf{m}$ is the dominance order 
$$
 \mathbf{k} \leq  \mathbf{m} \quad \Longleftrightarrow \quad \begin{cases}
    \sum_{l=1}^{i}k_{l}\leq \sum_{l=1}^{i}m_{l} \quad (i=1,\ldots, r-1) \\
    k_{1}+\cdots+k_{r}= m_{1}+\cdots+m_{r} 
  \end{cases}.
$$

Similarly, the {\it{interpolation Jack polynomials}} (or {\it{shifted Jack polynomials}}) $P_{\mathbf{m}}^{\mathrm{ip}}\left(\mathbf{z};\frac{d}{2}\right)$
are a family of homogeneous symmetric polynomials defined by the following two conditions \cite{Sa1}, \cite{KS}, \cite{OO}: 
\begin{align}
{\rm{(1)}{}^{\mathrm{ip}}} \, &  
P_{\mathbf{k}}^{\mathrm{ip}}\left(\mathbf{m}+\frac{d}{2}\delta ;\frac{d}{2}\right)=0, \quad \text{unless $\mathbf{k} \subseteq \mathbf{m} \in \mathcal{P}$} \nonumber \\
{\rm{(2)}{}^{\mathrm{ip}}} \, &
P_{\mathbf{m}}^{\mathrm{ip}}\left(\mathbf{z};\frac{d}{2}\right)
   =
   P_{\mathbf{m}}\left(\mathbf{z};\frac{d}{2}\right)
   +\text{(lower degree terms)}. \nonumber
\end{align}
Here $ \delta $ denotes the staircase partition $(r-1,r-2,\ldots ,2,1,0)$ and $\mathbf{k} \subseteq \mathbf{m}$ is the inclusion partial order defined by  
$$
\mathbf{k}\subseteq \mathbf{m} \quad \Longleftrightarrow \quad k_{i}\leq m_{i} \quad i=1,\ldots, r. 
$$
For convenience, we introduce two kinds of normalization for Jack polynomials,
\begin{align}
\Phi _{\mathbf{m}}^{(d)}(\mathbf{z})
   &:=
   \frac{P_{\mathbf{m}}\left(\mathbf{z};\frac{d}{2}\right)}{P_{\mathbf{m}}\left(\mathbf{1};\frac{d}{2}\right)},  \nonumber \\
\Psi _{\mathbf{m}}^{(d)}(\mathbf{z})
   &:=
   \frac{P_{\mathbf{m}}\left(\mathbf{z};\frac{d}{2}\right)}{P_{\mathbf{m}}^{\mathrm{ip}}\left(\mathbf{m}+\frac{d}{2}\delta ;\frac{d}{2}\right)}
   =
   \frac{P_{\mathbf{m}}\left(\mathbf{1};\frac{d}{2}\right)}{P_{\mathbf{m}}^{\mathrm{ip}}\left(\mathbf{m}+\frac{d}{2}\delta ;\frac{d}{2}\right)}
   \Phi _{\mathbf{m}}^{(d)}(\mathbf{z}), \nonumber
\end{align}
where $\mathbf{1}:=(1,\ldots,1)$.

In the previous paper \cite{Sh2}, we gave some new Pieri type formulas for Jack polynomials, which we call {\it{twisted Pieri formulas}}. 
These formulas explicitly express the action of twisted Sekiguchi operators
\begin{equation}
\label{eq:falling twisted Sekiguchi}
\frac{(\ad{|\partial_{\mathbf{z}}|})^{l}}{l!}S_{r}^{(d)}(u;\mathbf{z}) \quad (l=0,1,\ldots,r)
\end{equation}
on the Jack polynomials $\Phi_{\mathbf{m}}^{(d)}(\mathbf{z})$ or $\Psi_{\mathbf{m}}^{(d)}(\mathbf{z})$. 
Here
$$
|\partial_{\mathbf{z}}|
   :=
   \sum_{j=1}^{r}\partial_{z_{j}}, \quad 
\partial_{z_{j}}
   :=
   \frac{\partial }{\partial z_{j}}
$$
and 
$$
(\ad{A})(B):=AB-BA,
$$
and $S_{r}^{(d)}(u;\mathbf{z})$ is the Sekiguchi operator defined by 
\begin{align}
S_{r}^{(d)}(u;\mathbf{z})
   &:=\sum_{p=0}^{r}H_{r,p}^{(d)}(\mathbf{z})u^{r-p}, \nonumber \\
H_{r,p}^{(d)}(\mathbf{z})
   &:=
   \sum_{l=0}^{p}
   \left(\frac{2}{d}\right)^{p-l}\!\!
   \sum_{\substack{I \subseteq [r],\\ |I|=l}}\!\!
   \left(\frac{1}{\Delta (\mathbf{z})}\left(\prod_{i \in I}
      z_{i}\partial_{z_{i}}\right)\Delta (\mathbf{z})\right)\!\!
   \sum_{\substack{J \subseteq [r]\setminus I,\\ |J|=p-l}}
   \left(\prod_{j \in J}
      z_{j}\partial_{z_{j}}\right) \nonumber
\end{align}
where $[r]:=\{1,2,\ldots ,r\}$ and 
$$
\Delta (\mathbf{z}):=\prod_{1\leq i<j\leq r}(z_{i}-z_{j}).
$$
{\bf{Twisted Pieri formulas for Jack polynomials}} (\cite{Sh2})
For $l=0,1,\ldots, r$, we have 
\begin{align}
& \left[\frac{(\ad{|\partial_{\mathbf{z}}|})^{l}}{l!}S_{r}^{(d)}(u;\mathbf{z})\right]\Phi_{\mathbf{x}}^{(d)}(\mathbf{z}) \nonumber \\
\label{eq:Phi twisted-}
   & \quad =
     \sum_{\substack{J \subseteq [r], \\ |J|=l}}
     \Phi_{\mathbf{x}-\epsilon_{J}}^{(d)}(\mathbf{z})
     I_{J^{c}}^{(d)}(u;\mathbf{x})
     A_{-,J}^{(d)}(\mathbf{x})
     \prod_{j \in J}\left(x_{j}+\frac{d}{2}(r-j)\right), \\
\label{eq:Psi twisted-}
& \left[\frac{(\ad{|\partial_{\mathbf{z}}|})^{l}}{l!}S_{r}^{(d)}(u;\mathbf{z})\right]\Psi_{\mathbf{x}}^{(d)}(\mathbf{z})
    =\!\!\!\!\!\!
         \sum_{\substack{J \subseteq [r], |J|=l, \\ \mathbf{x}-\epsilon_{J} \in \mathcal{P}}}\!\!\!\!\!\!
         \Psi_{\mathbf{x}-\epsilon_{J}}^{(d)}(\mathbf{z})
            I_{J^{c}}^{(d)}(u;\mathbf{x})
            A_{+,J}^{(d)}(\mathbf{x}-\epsilon_{J}), 
\end{align}
where $J^{c}:=[r]\setminus J$, $\epsilon_{J}:=\sum_{j\in J}\epsilon _{j}$, $\epsilon_{j}:=(0,\ldots,0,\stackrel{j}{\stackrel{\vee}{1}},0,\ldots,0) \in \mathbb {Z}^{r}$ and 
\begin{align}
A_{\pm ,J}^{(d)}(\mathbf{x})
   :=&
      \!\!\prod_{j \in J, l \in J^{c}}\!\!\frac{x_{j}-x_{l}-\frac{d}{2}(j-l)\pm \frac{d}{2}}{x_{j}-x_{l}-\frac{d}{2}(j-l)}, \nonumber \\
I_{J^{c}}^{(d)}(u;\mathbf{x})
   :=&
   \left(\frac{2}{d}\right)^{r}
   \prod_{l \in J^{c}}
   \left(x_{l}+\frac{d}{2}(u+r-l)\right) \nonumber \\
   =&
   \left(\frac{2}{d}\right)^{|J|}
   \prod_{l \in J^{c}}
   \left(u+r-l+\frac{2}{d}m_{l}\right). \nonumber 
\end{align}
Further, by comparing the coefficients for $u^{r-l}$ of the twisted Pieri (\ref{eq:Phi twisted-}) and (\ref{eq:Psi twisted-}), 
we also derived
\begin{align}
\left(\frac{d}{2}\right)^{l}
\left[\frac{(\ad{|\partial_{\mathbf{z}}|})^{l}}{l!}H_{r,l}^{(d)}(\mathbf{z})\right]\Phi_{\mathbf{x}}^{(d)}(\mathbf{z})
   &=
      \sum_{\substack{J \subseteq [r], |J|=l, \\ \mathbf{x}-\epsilon_{J} \in \mathcal{P}}}
      \Phi_{\mathbf{x}-\epsilon_{J}}^{(d)}(\mathbf{z})
      A_{-,J}^{(d)}(\mathbf{x}) \nonumber \\
\label{eq:Phikey lemma 2}
   & \quad \quad \quad \cdot 
      \prod_{j \in J}\left(x_{j}+\frac{d}{2}(r-j)\right), \\
\label{eq:Psikey lemma 2}
\left(\frac{d}{2}\right)^{l}
\left[\frac{(\ad{|\partial_{\mathbf{z}}|})^{l}}{l!}H_{r,l}^{(d)}(\mathbf{z})\right]\Psi_{\mathbf{x}}^{(d)}(\mathbf{z})
   &=
      \sum_{\substack{J \subseteq [r], |J|=l, \\ \mathbf{x}-\epsilon_{J} \in \mathcal{P}}}
      \Psi_{\mathbf{x}-\epsilon_{J}}^{(d)}(\mathbf{z})
      A_{+,J}^{(d)}(\mathbf{x}-\epsilon_{J})
\end{align}
for any $\mathbf{z} \in \mathbb{C}^{r}$ and $l=0,1,\ldots, r$.

These formulas (\ref{eq:Phi twisted-}), (\ref{eq:Psi twisted-}) and (\ref{eq:Phikey lemma 2}), (\ref{eq:Psikey lemma 2}) are natural generalizations of the relations for Sekiguchi operators \cite{Se}, \cite{D}, \cite{M}
\begin{align}
\label{eq:Sekiguchi}
H_{r,p}^{(d)}(\mathbf{z})P_{\mathbf{m}}\left(\mathbf{z};\frac{d}{2}\right)
   &=
   P_{\mathbf{m}}\left(\mathbf{z};\frac{d}{2}\right)
   e_{r,p}\left(\mathbf{m}+\frac{d}{2}\delta \right), \\
\label{eq:Sekiguchi gen}
S_{r}^{(d)}(u;\mathbf{z})P_{\mathbf{m}}\left(\mathbf{z};\frac{d}{2}\right)
   &=
   P_{\mathbf{m}}\left(\mathbf{z};\frac{d}{2}\right)
   I_{r}^{(d)}(u;\mathbf{m}),
\end{align}
where $e_{r,k}(\mathbf{z})$ is the elementary symmetric polynomial
$$
e_{r,k}(\mathbf{z})
   :=
   \!\!\!\!\sum_{1\leq i_{1}<\cdots <i_{k}\leq r}\!\!\!\!
      z_{i_{1}}\cdots z_{i_{k}} \quad (k=1,\ldots ,r), 
   \quad e_{r,0}(\mathbf{z}):=1
$$
and 
$$
I_{r}^{(d)}(u;\mathbf{m})
   :=
      \prod_{k=1}^{r}\left(u+r-k+\frac{2}{d}m_{k}\right)
   =\left(\frac{2}{d}\right)^{r}
      \prod_{k=1}^{r}\left(m_{k}+\frac{d}{2}(u+r-k)\right).
$$
Since the first-order Sekiguchi operator $H_{r,1}^{(d)}(\mathbf{z})$ equals to the Euler operator $\sum_{i=1}^{r}z_{i}\partial_{z_{i}}$ essentially
$$
H_{r,1}^{(d)}(\mathbf{z})
   =
   \frac{2}{d}\sum_{i=1}^{r}z_{i}\partial_{z_{i}}
   +\frac{r(r-1)}{2}
$$
and
$$
(\ad{|\partial_{\mathbf{z}}|})H_{r,1}^{(d)}(\mathbf{z})
   =
   (\ad{|\partial_{\mathbf{z}}|})\left(\frac{2}{d}\sum_{i=1}^{r}z_{i}\partial_{z_{i}} \right)
   =
   \frac{2}{d}|\partial_{\mathbf{z}}|,
$$
the formulas (\ref{eq:Phi twisted-}), (\ref{eq:Psi twisted-}) are also generalizations of Pieri type formulas \cite{L}
\begin{align}
|\partial_{\mathbf{z}}|\Phi_{\mathbf{x}}^{(d)}(\mathbf{z})
   &=
   \sum_{i=1}^{r}
         \Phi_{\mathbf{x}-\epsilon_{i}}^{(d)}(\mathbf{z})
         \left(x_{i}+\frac{d}{2}(r-i)\right)
         A_{-,i}^{(d)}(\mathbf{x}), \nonumber \\
|\partial_{\mathbf{z}}|\Psi_{\mathbf{x}}^{(d)}(\mathbf{z})
   &=
   \sum_{\substack{1\leq i\leq r, \\ \mathbf{x}-\epsilon_{i} \in \mathcal{P}}}
         \Psi_{\mathbf{x}-\epsilon_{i}}^{(d)}(\mathbf{z})
         A_{+,i}^{(d)}(\mathbf{x}-\epsilon_{i}). \nonumber 
\end{align}
We remark that if $\mathbf{x}-\epsilon_{i} \not\in \mathcal{P}$ (resp. $\mathbf{x}+\epsilon_{i} \not\in \mathcal{P}$) then $A_{-,i}^{(d)}(\mathbf{x})=0$ (resp. $A_{+,i}^{(d)}(\mathbf{x})=0$).

The previous twisted Pieri formulas (\ref{eq:Phi twisted-}) and (\ref{eq:Psi twisted-}) are the falling type, i.e., lowering the degree of the Jack polynomials. 
In the previous paper \cite{Sh2}, as applications of these falling type twisted Pieri formulas, we gave another proofs of the following difference equations for the interpolation Jack polynomials proved by Knop-Sahi \cite{KS}. 
\begin{thm}[Knop-Sahi]
\label{thm:Difference formula for IJP}
For any $\mathbf{x} \in \mathbb{C}^{r}$ and $\mathbf{k} \in \mathcal{P}$, we have 
\begin{equation}
\label{eq:diff eq for IJP}
D_{r}^{(d)\,\mathrm{ip}}(u;\mathbf{x})P_{\mathbf{k}}^{\mathrm{ip}}\left(\mathbf{x}+\frac{d}{2}\delta ;\frac{d}{2}\right)
   =
   P_{\mathbf{k}}^{\mathrm{ip}}\left(\mathbf{x}+\frac{d}{2}\delta ;\frac{d}{2}\right)I_{r}^{(d)}(u;\mathbf{k}),
\end{equation}
where
\begin{align}
D_{r}^{(d)\,\mathrm{ip}}(u;\mathbf{x})
   &:=
   \sum_{\substack{J \subseteq [r]}}
   (-1)^{|J|}
   I_{J^{c}}^{(d)}(u;\mathbf{x})
   A_{-,J}^{(d)}(\mathbf{x})
   \prod_{j \in J}\left(x_{j}+\frac{d}{2}(r-j)\right)
   T_{\mathbf{x}}^{J}, \nonumber \\
T_{x_{j}}f(\mathbf{x})
   &:=
   f(\mathbf{x}-\epsilon _{j}), \quad 
T_{\mathbf{x}}^{J}:=\prod_{j \in J}T_{x_{j}}. \nonumber   
\end{align}
\end{thm}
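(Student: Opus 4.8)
The plan is to begin from the falling twisted Pieri formula (\ref{eq:Phi twisted-}) in the $\Phi$-normalization, whose structure constants involve $A_{-,J}^{(d)}$ exactly as $D_{r}^{(d)\,\mathrm{ip}}$ does, and to sum it over $l=0,1,\ldots,r$. On symmetric polynomials the generating operator $\sum_{l\ge 0}\frac{(\ad|\partial_{\mathbf{z}}|)^{l}}{l!}S_{r}^{(d)}(u;\mathbf{z})$ coincides with the conjugation $e^{|\partial_{\mathbf{z}}|}S_{r}^{(d)}(u;\mathbf{z})e^{-|\partial_{\mathbf{z}}|}$, and the sum is finite: the right-hand side of (\ref{eq:Phi twisted-}) is empty for $l>r$ (there being no $l$-element subsets of $[r]$), so only $l=0,\ldots,r$ contribute. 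Reading $e^{\pm|\partial_{\mathbf{z}}|}$ as the shift $\mathbf{z}\mapsto\mathbf{z}\pm\mathbf{1}$ and applying $e^{-|\partial_{\mathbf{z}}|}$ to the summed identity gives, for every partition $\mathbf{x}\in\mathcal{P}$,
\[
S_{r}^{(d)}(u;\mathbf{z})\,\Phi_{\mathbf{x}}^{(d)}(\mathbf{z}-\mathbf{1})=\sum_{J\subseteq[r]}c_{J}(u;\mathbf{x})\,\Phi_{\mathbf{x}-\epsilon_{J}}^{(d)}(\mathbf{z}-\mathbf{1}),\qquad c_{J}(u;\mathbf{x}):=I_{J^{c}}^{(d)}(u;\mathbf{x})A_{-,J}^{(d)}(\mathbf{x})\prod_{j\in J}\Big(x_{j}+\tfrac{d}{2}(r-j)\Big).
\]

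Next I would expand both sides in the Jack basis $\{\Phi_{\mathbf{k}}^{(d)}(\mathbf{z})\}_{\mathbf{k}\in\mathcal{P}}$. Writing $\Phi_{\mathbf{y}}^{(d)}(\mathbf{z}-\mathbf{1})=\sum_{\mathbf{k}}\gamma_{\mathbf{y},\mathbf{k}}\Phi_{\mathbf{k}}^{(d)}(\mathbf{z})$ (with $|\mathbf{y}|=y_{1}+\cdots+y_{r}$), the binomial theorem for Jack polynomials together with the inversion of the unitriangular matrix of generalized binomial coefficients identifies the $(-\mathbf{1})$-shift coefficients with interpolation Jack values,
\[
\gamma_{\mathbf{y},\mathbf{k}}=(-1)^{|\mathbf{y}|-|\mathbf{k}|}\,\frac{P_{\mathbf{k}}^{\mathrm{ip}}\big(\mathbf{y}+\tfrac{d}{2}\delta;\tfrac{d}{2}\big)}{P_{\mathbf{k}}^{\mathrm{ip}}\big(\mathbf{k}+\tfrac{d}{2}\delta;\tfrac{d}{2}\big)},
\]
the two features that matter being that the $\mathbf{y}$-dependence is carried by $P_{\mathbf{k}}^{\mathrm{ip}}(\mathbf{y}+\tfrac{d}{2}\delta;\tfrac{d}{2})$ and that the sign is exactly $(-1)^{|\mathbf{y}|}$ up to a factor depending only on $\mathbf{k}$.

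Then I would apply the Sekiguchi eigenrelation (\ref{eq:Sekiguchi gen}) termwise, $S_{r}^{(d)}(u;\mathbf{z})\Phi_{\mathbf{k}}^{(d)}(\mathbf{z})=\Phi_{\mathbf{k}}^{(d)}(\mathbf{z})I_{r}^{(d)}(u;\mathbf{k})$, and compare the coefficient of $\Phi_{\mathbf{k}}^{(d)}(\mathbf{z})$ on both sides, obtaining $\gamma_{\mathbf{x},\mathbf{k}}I_{r}^{(d)}(u;\mathbf{k})=\sum_{J}c_{J}(u;\mathbf{x})\gamma_{\mathbf{x}-\epsilon_{J},\mathbf{k}}$. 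Substituting the interpolation-value form of $\gamma$ and using $|\mathbf{x}-\epsilon_{J}|=|\mathbf{x}|-|J|$, the common factor $(-1)^{|\mathbf{x}|-|\mathbf{k}|}/P_{\mathbf{k}}^{\mathrm{ip}}(\mathbf{k}+\tfrac{d}{2}\delta;\tfrac{d}{2})$ cancels and the residual sign $(-1)^{-|J|}=(-1)^{|J|}$ produces precisely the alternating sign in $D_{r}^{(d)\,\mathrm{ip}}$; since $T_{\mathbf{x}}^{J}P_{\mathbf{k}}^{\mathrm{ip}}(\mathbf{x}+\tfrac{d}{2}\delta;\tfrac{d}{2})=P_{\mathbf{k}}^{\mathrm{ip}}(\mathbf{x}-\epsilon_{J}+\tfrac{d}{2}\delta;\tfrac{d}{2})$, this is exactly (\ref{eq:diff eq for IJP}) for partitions $\mathbf{x}$. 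Finally, both sides of (\ref{eq:diff eq for IJP}) are rational in $\mathbf{x}$ and agree on the Zariski-dense set of partition points, so the identity extends to all $\mathbf{x}\in\mathbb{C}^{r}$.

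I expect the main obstacle to be the second step: rigorously pinning down the $(-\mathbf{1})$-shift coefficients $\gamma_{\mathbf{y},\mathbf{k}}$, in particular proving that their $\mathbf{y}$-dependence is governed by $P_{\mathbf{k}}^{\mathrm{ip}}(\mathbf{y}+\tfrac{d}{2}\delta;\tfrac{d}{2})$ with the exact sign $(-1)^{|\mathbf{y}|}$. This can be handled either by invoking the binomial theorem for Jack polynomials and inverting the matrix of generalized binomial coefficients, or self-containedly by checking that $\mathbf{y}\mapsto(-1)^{|\mathbf{y}|}\gamma_{\mathbf{y},\mathbf{k}}$ is a shifted-symmetric polynomial of the correct degree satisfying the vanishing conditions $(1)^{\mathrm{ip}}$ and the top-term normalization $(2)^{\mathrm{ip}}$, hence proportional to $P_{\mathbf{k}}^{\mathrm{ip}}$. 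Everything else reduces to bookkeeping of the factors $A_{-,J}^{(d)}$, $I_{J^{c}}^{(d)}$ and of the sign.
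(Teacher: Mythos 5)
Your argument is correct, but it is not the route taken in Section~3 of this paper; it is essentially the route of the cited previous paper \cite{Sh2}, which this paper explicitly sets aside in favour of ``yet another proof''. Concretely, you pair the \emph{falling} twisted Pieri formula (\ref{eq:Phi twisted-}) with the argument shift $e^{-|\partial_{\mathbf{z}}|}:\mathbf{z}\mapsto\mathbf{z}-\mathbf{1}$ and the classical binomial expansion of $\Phi_{\mathbf{y}}^{(d)}(\mathbf{z}-\mathbf{1})$, extracting the difference equation from the coefficients of the eigenbasis $\{\Phi_{\mathbf{k}}^{(d)}(\mathbf{z})\}$. The paper instead pairs the \emph{raising} twisted Pieri formula (\ref{eq:Psi twisted+}) --- the new result of this paper --- with the multiplication operator $e^{|\mathbf{z}|}$ and the binomial expansion (\ref{eq:raising binomial}) of $e^{|\mathbf{z}|}\Psi_{\mathbf{k}}^{(d)}(\mathbf{z})$, comparing coefficients of $\Psi_{\mathbf{x}}^{(d)}(\mathbf{z})$; the alternating sign then comes from $\frac{(\ad{|\mathbf{z}|})^{l}}{l!}=(-1)^{l}\frac{(-\ad{|\mathbf{z}|})^{l}}{l!}$ rather than from the homogeneity sign $(-1)^{|\mathbf{y}|-|\mathbf{k}|}$ in your shift coefficients $\gamma_{\mathbf{y},\mathbf{k}}$. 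The two proofs are exactly dual (derivation side versus multiplication side of the $S_{r}^{(d)}$-eigenrelation), and yours buys independence from the new raising formulas at the cost of not illustrating them, which is the whole point of this paper. Both end with the same Zariski-density step.

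Two small technical remarks. First, your justification for truncating $\sum_{l\ge 0}\frac{(\ad{|\partial_{\mathbf{z}}|})^{l}}{l!}S_{r}^{(d)}(u;\mathbf{z})$ at $l=r$ --- ``the right-hand side of (\ref{eq:Phi twisted-}) is empty for $l>r$'' --- describes the right-hand side of a formula that is only stated for $l\le r$; the correct reason is that $\ad{|\partial_{\mathbf{z}}|}$ is locally nilpotent on differential operators with polynomial coefficients (it differentiates the coefficients), which is also what legitimises $e^{\ad{|\partial_{\mathbf{z}}|}}S=e^{|\partial_{\mathbf{z}}|}Se^{-|\partial_{\mathbf{z}}|}$. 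Second, the identification
\begin{equation*}
\gamma_{\mathbf{y},\mathbf{k}}
   =(-1)^{|\mathbf{y}|-|\mathbf{k}|}\,
   \frac{P_{\mathbf{k}}^{\mathrm{ip}}\left(\mathbf{y}+\frac{d}{2}\delta;\frac{d}{2}\right)}{P_{\mathbf{k}}^{\mathrm{ip}}\left(\mathbf{k}+\frac{d}{2}\delta;\frac{d}{2}\right)}
\end{equation*}
does not require inverting the matrix of generalized binomial coefficients: by homogeneity $\Phi_{\mathbf{y}}^{(d)}(\mathbf{z}-\mathbf{1})=(-1)^{|\mathbf{y}|}\Phi_{\mathbf{y}}^{(d)}(\mathbf{1}+(-\mathbf{z}))$, and applying the binomial formula at the argument $-\mathbf{z}$ gives the stated coefficient directly. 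With these points tightened, the proof is complete.
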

Further, we derived the following Pieri formula for interpolation Jack polynomials. 
\begin{thm}[\cite{Sh2}]
\label{thm:main result 1}
For any $\mathbf{x} \in \mathbb{C}^{r}$ and $\mathbf{k} \in \mathcal{P}$, we have
\begin{align}
& I_{r}^{(d)}(u;\mathbf{x})
\frac{P_{\mathbf{k}}^{\mathrm{ip}}\left(\mathbf{x}+\frac{d}{2}\delta  ;\frac{d}{2}\right)}{P_{\mathbf{k}}\left(\mathbf{1} ;\frac{d}{2}\right)} \nonumber \\
\label{eq:Pieri for IJP}
   & \quad =
   \sum_{\substack{J \subseteq [r], \\ \mathbf{k}+\epsilon_{J} \in \mathcal{P}}}
      \frac{P_{\mathbf{k}+\epsilon_{J}}^{\mathrm{ip}}\left(\mathbf{x}+\frac{d}{2}\delta ;\frac{d}{2}\right)}{P_{\mathbf{k}+\epsilon_{J}}\left(\mathbf{1} ;\frac{d}{2}\right)}
      I_{J^{c}}^{(d)}\left(u;\mathbf{k}\right)A_{+,J}^{(d)}(\mathbf{k}).
\end{align}
\end{thm}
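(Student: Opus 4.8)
The plan is to fix $\mathbf{k}$ and to read the left-hand side of (\ref{eq:Pieri for IJP}) as a function of $\mathbf{x}$. Writing $w_{k}:=x_{k}+\frac{d}{2}(r-k)$, the multiplier
\begin{equation}
I_{r}^{(d)}(u;\mathbf{x})=\left(\tfrac{2}{d}\right)^{r}\prod_{k=1}^{r}\left(w_{k}+\tfrac{d}{2}u\right) \nonumber
\end{equation}
is symmetric in $w_{1},\dots,w_{r}$ and of degree $\leq 1$ in each; in particular it is a shifted symmetric polynomial of degree $r$, so that $g(\mathbf{x}):=I_{r}^{(d)}(u;\mathbf{x})\,P_{\mathbf{k}}^{\mathrm{ip}}(\mathbf{x}+\frac{d}{2}\delta;\frac{d}{2})$ is shifted symmetric of degree $|\mathbf{k}|+r$. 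Since $\{P_{\mathbf{k}'}^{\mathrm{ip}}(\mathbf{x}+\frac{d}{2}\delta;\frac{d}{2})\}_{\mathbf{k}'\in\mathcal{P}}$ is a graded basis of the shifted symmetric polynomials (by condition $\mathrm{(2)}^{\mathrm{ip}}$), there is a unique finite expansion $g=\sum_{\mathbf{k}'}c_{\mathbf{k}'}(u)\,P_{\mathbf{k}'}^{\mathrm{ip}}(\mathbf{x}+\frac{d}{2}\delta;\frac{d}{2})$, after dividing by $P_{\mathbf{k}}(\mathbf{1};\frac{d}{2})$. The theorem is then equivalent to the two claims that $c_{\mathbf{k}'}=0$ unless $\mathbf{k}'=\mathbf{k}+\epsilon_{J}$ with $\mathbf{k}+\epsilon_{J}\in\mathcal{P}$, and that $c_{\mathbf{k}+\epsilon_{J}}=\frac{P_{\mathbf{k}}(\mathbf{1};\frac{d}{2})}{P_{\mathbf{k}+\epsilon_{J}}(\mathbf{1};\frac{d}{2})}\,I_{J^{c}}^{(d)}(u;\mathbf{k})\,A_{+,J}^{(d)}(\mathbf{k})$.

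For the inclusion support I would use the extra vanishing $\mathrm{(1)}^{\mathrm{ip}}$. As $I_{r}^{(d)}(u;\mathbf{m})$ is nonzero at every partition point for generic $u$, the product $g$ inherits the vanishing of $P_{\mathbf{k}}^{\mathrm{ip}}$, so $g(\mathbf{m})=0$ for all $\mathbf{m}\in\mathcal{P}$ with $\mathbf{k}\not\subseteq\mathbf{m}$. A minimality argument then forces $c_{\mathbf{k}'}=0$ unless $\mathbf{k}\subseteq\mathbf{k}'$: choosing $\mathbf{k}'\not\supseteq\mathbf{k}$ minimal for $\subseteq$ with $c_{\mathbf{k}'}\neq0$ and evaluating at $\mathbf{x}=\mathbf{k}'$, every other surviving term $\mathbf{k}''$ would satisfy $\mathbf{k}''\subsetneq\mathbf{k}'$ and $\mathbf{k}\subseteq\mathbf{k}''$, forcing $\mathbf{k}\subseteq\mathbf{k}'$ and a contradiction, whence $c_{\mathbf{k}'}P_{\mathbf{k}'}^{\mathrm{ip}}(\mathbf{k}'+\frac{d}{2}\delta;\frac{d}{2})=0$. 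Together with the degree bound $|\mathbf{k}'|\leq|\mathbf{k}|+r$ this confines the support to $\{\mathbf{k}':\mathbf{k}\subseteq\mathbf{k}',\ |\mathbf{k}'|\leq|\mathbf{k}|+r\}$. The coefficients are then accessible through a triangular evaluation: at $\mathbf{x}=\mathbf{k}+\epsilon_{J_{0}}$ only the terms with $\mathbf{k}+\epsilon_{J}\subseteq\mathbf{k}+\epsilon_{J_{0}}$, i.e. $J\subseteq J_{0}$, survive, yielding a unitriangular system solved inductively on $|J_{0}|$; the base point $\mathbf{x}=\mathbf{k}$ already gives $c_{\emptyset}=I_{r}^{(d)}(u;\mathbf{k})$, matching the $J=\emptyset$ term of (\ref{eq:Pieri for IJP}).

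The decisive input, for both the refinement to vertical strips $\mathbf{k}'=\mathbf{k}+\epsilon_{J}$ and the exact values, is the $\Psi$-twisted Pieri formula (\ref{eq:Psi twisted-}) transported through the duality between Jack and interpolation Jack polynomials (the binomial-type change of basis between $\{\Phi_{\mathbf{m}}^{(d)}\}$ and the evaluations $P_{\mathbf{k}}^{\mathrm{ip}}(\mathbf{m}+\frac{d}{2}\delta;\frac{d}{2})$). The structural key is that $I_{J^{c}}^{(d)}(u;\mathbf{x})=(\frac{2}{d})^{|J|}\prod_{l\in J^{c}}(u+r-l+\frac{2}{d}x_{l})$ depends only on the coordinates indexed by $J^{c}$, hence is invariant under $\mathbf{x}\mapsto\mathbf{x}+\epsilon_{J}$; consequently the twisted-Pieri coefficient $I_{J^{c}}^{(d)}(u;\mathbf{x})A_{+,J}^{(d)}(\mathbf{x}-\epsilon_{J})$, read along $\mathbf{x}=\mathbf{k}+\epsilon_{J}$, collapses to exactly $I_{J^{c}}^{(d)}(u;\mathbf{k})A_{+,J}^{(d)}(\mathbf{k})$, the coefficient demanded by the theorem. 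Since (\ref{eq:Psi twisted-}) produces manifestly vertical-strip terms $\mathbf{x}-\epsilon_{J}$, the transported identity carries the vertical-strip support automatically, excluding the multiply-added-box partitions left open by the degree bound. As an independent check I would apply the Knop-Sahi operator $D_{r}^{(d)\,\mathrm{ip}}(v;\mathbf{x})$ of Theorem \ref{thm:Difference formula for IJP} with a second spectral variable $v$ to both sides: each $P_{\mathbf{k}+\epsilon_{J}}^{\mathrm{ip}}$ is then an eigenfunction with eigenvalue $I_{r}^{(d)}(v;\mathbf{k}+\epsilon_{J})$, giving an overdetermined constraint on the $c_{\mathbf{k}+\epsilon_{J}}$ consistent with the claimed values.

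I expect the coefficient computation to be the main obstacle. The inclusion support and the triangular scaffolding are essentially formal, but converting the $\Psi$-twisted Pieri coefficients into the interpolation-Pieri coefficients requires transporting them through the duality and then reconciling three separate pieces: the rational factor $A_{+,J}^{(d)}(\mathbf{k})$, the normalization ratio $P_{\mathbf{k}}(\mathbf{1};\frac{d}{2})/P_{\mathbf{k}+\epsilon_{J}}(\mathbf{1};\frac{d}{2})$, and the box factors $\prod_{j\in J}(k_{j}+\frac{d}{2}(r-j))$ that occur in the falling formulas but must disappear from the raising one. Carrying the $J$-dependent specialization $\mathbf{x}=\mathbf{k}+\epsilon_{J}$ uniformly over all $J$ at once, rather than term by term, and verifying the ensuing combinatorial identity relating $A_{+,J}$ to $A_{-,J}$ and the normalizations, is the delicate technical point on which the proof turns.
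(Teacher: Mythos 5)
There is a genuine gap. Your scaffolding --- expanding $g(\mathbf{x})=I_{r}^{(d)}(u;\mathbf{x})P_{\mathbf{k}}^{\mathrm{ip}}\left(\mathbf{x}+\frac{d}{2}\delta;\frac{d}{2}\right)$ in the interpolation basis, bounding the degree, and using the extra vanishing $\mathrm{(1)}^{\mathrm{ip}}$ to restrict the support to $\{\mathbf{k}'\,:\,\mathbf{k}\subseteq\mathbf{k}',\ |\mathbf{k}'|\leq|\mathbf{k}|+r\}$ --- is sound but, as you concede, does not by itself exclude partitions such as $\mathbf{k}+2\epsilon_{1}$ nor produce the coefficients. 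Everything therefore hinges on the step you describe as ``transporting'' the falling-type formula \eqref{eq:Psi twisted-} ``through the duality,'' and that step is never actually performed: you do not specify which identity the transport yields, and your triangular evaluation only reduces the theorem to an identity among the generalized binomial coefficients $P_{\mathbf{k}}^{\mathrm{ip}}\left(\mathbf{m}+\frac{d}{2}\delta;\frac{d}{2}\right)$ that still has to be proved. Your own closing paragraph flags exactly this --- reconciling $A_{+,J}^{(d)}$, the normalization ratio, and the box factors --- as an unresolved obstacle, so what you have is a plan whose decisive computation is missing. (Your proposed consistency check via $D_{r}^{(d)\,\mathrm{ip}}(v;\mathbf{x})$ is also only a check: the eigenvalue constraint does not determine the $c_{\mathbf{k}+\epsilon_{J}}$ uniquely.)

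For comparison, the paper closes this gap without any support or triangularity argument: it computes $S_{r}^{(d)}(u,\mathbf{z})e^{|\mathbf{z}|}\Phi_{\mathbf{k}}^{(d)}(\mathbf{z})$ in two ways. Applying the binomial formula \eqref{eq:raising binomial 2} first and then the eigenvalue relation \eqref{eq:Sekiguchi gen} gives $\sum_{\mathbf{k}\subset\mathbf{x}}\Psi_{\mathbf{x}}^{(d)}(\mathbf{z})\,P_{\mathbf{k}}^{\mathrm{ip}}\left(\mathbf{x}+\frac{d}{2}\delta;\frac{d}{2}\right)I_{r}^{(d)}(u;\mathbf{x})/P_{\mathbf{k}}\left(\mathbf{1};\frac{d}{2}\right)$; writing instead $S_{r}^{(d)}(u,\mathbf{z})e^{|\mathbf{z}|}=e^{|\mathbf{z}|}\left[e^{-\ad{|\mathbf{z}|}}S_{r}^{(d)}(u,\mathbf{z})\right]$ and invoking the new \emph{raising}-type twisted Pieri formula for $\Phi$ from Theorem \ref{thm:twisted Pieri} (not the falling-type formula \eqref{eq:Psi twisted-} you cite), followed by \eqref{eq:raising binomial 2} again, gives $\sum_{\mathbf{x}}\Psi_{\mathbf{x}}^{(d)}(\mathbf{z})\sum_{J}P_{\mathbf{k}+\epsilon_{J}}^{\mathrm{ip}}\left(\mathbf{x}+\frac{d}{2}\delta;\frac{d}{2}\right)I_{J^{c}}^{(d)}(u;\mathbf{k})A_{+,J}^{(d)}(\mathbf{k})/P_{\mathbf{k}+\epsilon_{J}}\left(\mathbf{1};\frac{d}{2}\right)$. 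Comparing coefficients of $\Psi_{\mathbf{x}}^{(d)}(\mathbf{z})$ yields \eqref{eq:Pieri for IJP} for $\mathbf{x}\in\mathcal{P}$ at once --- vertical-strip support, $A_{+,J}^{(d)}(\mathbf{k})$, and the normalizations all emerge together --- and the rational-function argument extends it to all $\mathbf{x}\in\mathbb{C}^{r}$. If you want to salvage your route, you must either carry out the duality transport explicitly (essentially reproducing the argument of \cite{Sh2}) or switch to the raising-type formula as the paper does.
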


The purpose of this paper is to provide the raising type twisted Pieri formulas for Jack polynomials, which are generalizations of (\ref{eq:Sekiguchi}), (\ref{eq:Sekiguchi gen}) and Pieri formulas \cite{St}, \cite{M}:
\begin{align}         
\label{eq:Phi twisted+}
|\mathbf{z}|\Phi_{\mathbf{x}}^{(d)}(\mathbf{z})
   &=
      \sum_{i=1}^{r}
         \Phi_{\mathbf{x}+\epsilon_{i}}^{(d)}(\mathbf{z})
         A_{+,i}^{(d)}(\mathbf{x}), \\
\label{eq:Psi twisted+}
|\mathbf{z}|\Psi_{\mathbf{x}}^{(d)}(\mathbf{z})
   &=
      \sum_{\substack{1\leq i\leq r, \\ \mathbf{x}+\epsilon_{i} \in \mathcal{P}}}
         \Psi_{\mathbf{x}+\epsilon_{i}}^{(d)}(\mathbf{z})
         \left(x_{i}+1+\frac{d}{2}(r-i)\right)
         A_{-,i}^{(d)}(\mathbf{x}+\epsilon_{i}),
\end{align}
where 
$$
|\mathbf{z}|
   :=
   z_{1}+\cdots +z_{r}
   =
   (-\ad{|\mathbf{z}|})\left(\sum_{i=1}^{r}z_{i}\partial_{z_{i}} \right)
   =
   \frac{d}{2}(-\ad{|\mathbf{z}|})H_{r,1}^{(d)}(\mathbf{z}).
$$ 
\begin{thm}[Raising type twisted Pieri formulas for Jack polynomials]
\label{thm:twisted Pieri}
For $l=0,1,\ldots, r$, we have 
\begin{align}
\label{eq:Phi twisted+}
& \left[\frac{(-\ad{|\mathbf{z}|})^{l}}{l!}S_{r}^{(d)}(u;\mathbf{z})\right]\Phi_{\mathbf{k}}^{(d)}(\mathbf{z})
   =
   \sum_{\substack{J \subseteq [r] \\ |J|=l}}
      \Phi_{\mathbf{k}+\epsilon _{J }}^{(d)}(\mathbf{z})
      I_{J^{c}}^{(d)}(u,\mathbf{k})
      A_{+,J}^{(d)}(\mathbf{k}), \\
& \left[\frac{(-\ad{|\mathbf{z}|})^{l}}{l!}S_{r}^{(d)}(u,\mathbf{z})\right]\Psi_{\mathbf{k}}^{(d)}(\mathbf{z}) \nonumber \\
\label{eq:Psi twisted+}
    & \quad =\!\!\!\!\!\!
         \sum_{\substack{J \subseteq [r], |J|=l \\ \mathbf{k}+\epsilon _{J } \in \mathcal{P}}}\!\!\!\!\!\!
         \Psi_{\mathbf{k}+\epsilon _{J}}^{(d)}(\mathbf{z})
            I_{J^{c}}^{(d)}(u,\mathbf{k})
            A_{-,J}^{(d)}(\mathbf{k}+\epsilon _{J})
            \prod_{j \in J}\left(k_{j}+1+\frac{d}{2}(r-j)\right). 
\end{align}
\end{thm}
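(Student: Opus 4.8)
The plan is to prove the $\Phi$-identity of Theorem~\ref{thm:twisted Pieri} directly by induction on $l$, and then to obtain the $\Psi$-identity from it purely formally, using the normalization relation between $\Phi$ and $\Psi$ already encoded in the falling-type formulas (\ref{eq:Phi twisted-}) and (\ref{eq:Psi twisted-}). Write $X:=(-\ad{|\mathbf{z}|})$ and $T_{l}:=\frac{X^{l}}{l!}S_{r}^{(d)}(u;\mathbf{z})$, so that the quantity to be computed is $T_{l}\Phi_{\mathbf{k}}^{(d)}(\mathbf{z})$. Because $X=\ad(-|\mathbf{z}|)$ is a derivation we have $X(T_{l})=(l+1)T_{l+1}$, that is
\[
T_{l+1}=\frac{1}{l+1}\bigl(T_{l}|\mathbf{z}|-|\mathbf{z}|T_{l}\bigr);
\]
moreover, $X$ being inner, $\sum_{l\geq 0}t^{l}T_{l}=e^{-t|\mathbf{z}|}S_{r}^{(d)}(u;\mathbf{z})\,e^{t|\mathbf{z}|}$, which is the conceptual source of the recursion and already shows why a raising (multiplication by $|\mathbf{z}|$) phenomenon must appear.

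For the base case $l=0$ we have $T_{0}=S_{r}^{(d)}(u;\mathbf{z})$, and since $\Phi_{\mathbf{k}}^{(d)}$ is a scalar multiple of $P_{\mathbf{k}}$, the Sekiguchi eigenrelation (\ref{eq:Sekiguchi gen}) gives $T_{0}\Phi_{\mathbf{k}}^{(d)}=I_{r}^{(d)}(u;\mathbf{k})\Phi_{\mathbf{k}}^{(d)}$, which is the right-hand side at $l=0$ (only $J=\emptyset$, with $I_{\emptyset^{c}}^{(d)}=I_{r}^{(d)}$ and $A_{+,\emptyset}^{(d)}=1$). For the inductive step, apply the recursion to $\Phi_{\mathbf{k}}^{(d)}$ and insert the raising Pieri formula $|\mathbf{z}|\Phi_{\mathbf{x}}^{(d)}(\mathbf{z})=\sum_{i=1}^{r}A_{+,i}^{(d)}(\mathbf{x})\Phi_{\mathbf{x}+\epsilon_{i}}^{(d)}(\mathbf{z})$ together with the inductive hypothesis for $T_{l}$. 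This expresses $T_{l+1}\Phi_{\mathbf{k}}^{(d)}$ as a difference of two double sums over pairs $(i,J)$ with $|J|=l$, each term carrying $\Phi_{\mathbf{k}+\epsilon_{i}+\epsilon_{J}}^{(d)}$.

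The heart of the matter is to show that this difference collapses to $\sum_{|J'|=l+1}\Phi_{\mathbf{k}+\epsilon_{J'}}^{(d)}I_{J'^{c}}^{(d)}(u;\mathbf{k})A_{+,J'}^{(d)}(\mathbf{k})$. Two checks are required. First, the diagonal pairs with $i\in J$ would create spurious contributions $\Phi_{\mathbf{k}+2\epsilon_{i}+\epsilon_{\tilde J}}^{(d)}$; for a fixed $(i,J)$ with $i\in J$ such a contribution arises exactly once in each double sum, so it suffices to verify the term-by-term cancellation
\[
A_{+,i}^{(d)}(\mathbf{k})\,I_{J^{c}}^{(d)}(u;\mathbf{k}+\epsilon_{i})\,A_{+,J}^{(d)}(\mathbf{k}+\epsilon_{i})
=I_{J^{c}}^{(d)}(u;\mathbf{k})\,A_{+,J}^{(d)}(\mathbf{k})\,A_{+,i}^{(d)}(\mathbf{k}+\epsilon_{J})\qquad(i\in J).
\]
Second, the off-diagonal pairs with $i\notin J$ must, for each $J'$ of size $l+1$, sum over $i\in J'$ (with $J=J'\setminus\{i\}$) to the claimed coefficient $I_{J'^{c}}^{(d)}(u;\mathbf{k})A_{+,J'}^{(d)}(\mathbf{k})$; viewed as polynomials in $u$ this rests on the telescoping identity $I_{r}^{(d)}(u;\mathbf{k}+\epsilon_{i})-I_{r}^{(d)}(u;\mathbf{k})=I_{\{i\}^{c}}^{(d)}(u;\mathbf{k})$ (which is exactly the $l=1$ instance) and on the one-box factorizations of $A_{+,J'}^{(d)}$. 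Both are rational-function identities in $\mathbf{k}$ of the same nature as those underpinning the falling-type formulas of \cite{Sh2}, and I expect this bookkeeping---organizing the cancellations and checking the off-diagonal coefficient by comparing residues in $u$---to be the main obstacle.

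Finally, the $\Psi$-identity follows with no further analysis. Put $c_{\mathbf{k}}:=P_{\mathbf{k}}(\mathbf{1};\frac{d}{2})/P_{\mathbf{k}}^{\mathrm{ip}}(\mathbf{k}+\frac{d}{2}\delta;\frac{d}{2})$, so that $\Psi_{\mathbf{k}}^{(d)}=c_{\mathbf{k}}\Phi_{\mathbf{k}}^{(d)}$. By linearity of $T_{l}$ and the $\Phi$-identity,
\[
T_{l}\Psi_{\mathbf{k}}^{(d)}
=\sum_{\substack{J\subseteq[r],\,|J|=l\\ \mathbf{k}+\epsilon_{J}\in\mathcal{P}}}
\frac{c_{\mathbf{k}}}{c_{\mathbf{k}+\epsilon_{J}}}\,A_{+,J}^{(d)}(\mathbf{k})\,I_{J^{c}}^{(d)}(u;\mathbf{k})\,\Psi_{\mathbf{k}+\epsilon_{J}}^{(d)},
\]
where the terms with $\mathbf{k}+\epsilon_{J}\notin\mathcal{P}$ drop out because then $A_{+,J}^{(d)}(\mathbf{k})=0$. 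Comparing the two falling formulas (\ref{eq:Phi twisted-}) and (\ref{eq:Psi twisted-}) (again via $\Psi=c\,\Phi$) yields the normalization relation $\frac{c_{\mathbf{x}}}{c_{\mathbf{x}-\epsilon_{J}}}A_{-,J}^{(d)}(\mathbf{x})\prod_{j\in J}(x_{j}+\frac{d}{2}(r-j))=A_{+,J}^{(d)}(\mathbf{x}-\epsilon_{J})$; substituting $\mathbf{x}=\mathbf{k}+\epsilon_{J}$ converts the coefficient above into $A_{-,J}^{(d)}(\mathbf{k}+\epsilon_{J})\prod_{j\in J}(k_{j}+1+\frac{d}{2}(r-j))$, which is precisely the $\Psi$-identity of Theorem~\ref{thm:twisted Pieri}.
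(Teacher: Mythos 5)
Your overall strategy is the same as the paper's: induction on $l$ via the recursion $T_{l+1}=\frac{1}{l+1}\bigl(T_{l}|\mathbf{z}|-|\mathbf{z}|T_{l}\bigr)$, the Sekiguchi eigenrelation (\ref{eq:Sekiguchi gen}) for the base case, and the first-order raising Pieri formula to expand $|\mathbf{z}|\Phi_{\mathbf{k}}^{(d)}$. Two of your ingredients check out. The diagonal cancellation for $i\in J$ is correct: writing $y_{j}:=k_{j}-\frac{d}{2}j$, both sides of your claimed identity expand into the same multiset of factors $\frac{y_{a}-y_{b}+d/2}{y_{a}-y_{b}}$ (and $I_{J^{c}}^{(d)}(u;\mathbf{k}+\epsilon_{i})=I_{J^{c}}^{(d)}(u;\mathbf{k})$ since $i\notin J^{c}$), so the spurious $\Phi_{\mathbf{k}+2\epsilon_{i}+\epsilon_{\tilde J}}^{(d)}$ terms do cancel pairwise. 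Your derivation of the $\Psi$-identity from the $\Phi$-identity via the ratio $c_{\mathbf{k}}/c_{\mathbf{k}+\epsilon_{J}}$ read off by comparing (\ref{eq:Phi twisted-}) with (\ref{eq:Psi twisted-}) is also sound (distinct $J$ give distinct partitions $\mathbf{x}-\epsilon_{J}$, so the coefficients can be matched), and is arguably cleaner than the paper's remark that the $\Psi$ case is ``similar''.

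The gap is the off-diagonal coefficient identity, which you defer as bookkeeping. Fix $J'$ with $|J'|=l+1$. The one-box factorizations of the $A$-factors and the relations $I_{(J'\setminus\{i\})^{c}}^{(d)}(u;\mathbf{k}+\epsilon_{i})=(s_{i}+1+\frac{d}{2}u)\,I_{J'^{c}}^{(d)}(u;\mathbf{k})$ and $I_{(J'\setminus\{i\})^{c}}^{(d)}(u;\mathbf{k})=(s_{i}+\frac{d}{2}u)\,I_{J'^{c}}^{(d)}(u;\mathbf{k})$, with $s_{i}=k_{i}+\frac{d}{2}(r-i)$, reduce the off-diagonal sum, after extracting the common factor $I_{J'^{c}}^{(d)}(u;\mathbf{k})A_{+,J'}^{(d)}(\mathbf{k})$, to the assertion
$$
\sum_{i\in J'}\Bigl\{\bigl(s_{i}+1+\tfrac{d}{2}u\bigr)A_{-,\{i\},J'\setminus\{i\}}^{(d)}(\mathbf{k}+\epsilon_{i})A_{+,\{i\},J'\setminus\{i\}}^{(d)}(\mathbf{k})-\bigl(s_{i}+\tfrac{d}{2}u\bigr)A_{-,\{i\},J'\setminus\{i\}}^{(d)}(\mathbf{k})A_{+,\{i\},J'\setminus\{i\}}^{(d)}(\mathbf{k}-\epsilon_{i})\Bigr\}=l+1.
$$
This is not a consequence of the telescoping $I_{r}^{(d)}(u;\mathbf{k}+\epsilon_{i})-I_{r}^{(d)}(u;\mathbf{k})=I_{\{i\}^{c}}^{(d)}(u;\mathbf{k})$; that identity settles only $l=1$, where $J'\setminus\{i\}=\emptyset$ and every $A$-factor equals $1$. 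For $l\geq 1$ the displayed sum is a genuine Lagrange-interpolation/partial-fraction identity in the $k_{i}$ (the vanishing of the $u$-coefficient and the evaluation of the constant term are two separate nontrivial statements, so ``comparing residues in $u$'' does not discharge it). It is exactly the summation (\ref{eq:Mysterious sum}) of Lemma \ref{thm:Lemma 2.1}, applied to $\mathbf{x}=\mathbf{k}+\frac{d}{2}u\mathbf{1}$ (a uniform shift leaves the $A$-factors unchanged and turns $x_{i}+\frac{d}{2}(r-i)$ into $s_{i}+\frac{d}{2}u$); this lemma is the single nontrivial ingredient the paper imports from \cite{Sh2}. Your proof is complete once you invoke that lemma or prove it; as written, the argument stops exactly at its mathematical core.
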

Further, from these raising type twisted Pieri formulas (\ref{eq:Phi twisted+}), (\ref{eq:Psi twisted+}) and the binomial formulas for Jack polynomial \cite{VK}, \cite{L}
\begin{align}
e^{|\mathbf{z}|}\Phi_{\mathbf{k}}^{(d)}(\mathbf{z})
\label{eq:raising binomial 2}
   &=
   \sum_{\mathbf{k}\subset \mathbf{x}}
   \frac{P_{\mathbf{k}}^{\mathrm{ip}}\left(\mathbf{x}+\frac{d}{2}\delta ;\frac{d}{2}\right)}{P_{\mathbf{k}}\left(\mathbf{1} ;\frac{d}{2}\right)}
   \Psi_{\mathbf{x}}^{(d)}(\mathbf{z}), \\
e^{|\mathbf{z}|}\Psi_{\mathbf{k}}^{(d)}(\mathbf{z})
\label{eq:raising binomial}
   &=
   \sum_{\mathbf{k}\subset \mathbf{x}}
   \frac{P_{\mathbf{k}}^{\mathrm{ip}}\left(\mathbf{x}+\frac{d}{2}\delta ;\frac{d}{2}\right)}{P_{\mathbf{k}}^{\mathrm{ip}}\left(\mathbf{k}+\frac{d}{2}\delta ;\frac{d}{2}\right)}
   \Psi_{\mathbf{x}}^{(d)}(\mathbf{z}),
\end{align}
we give yet another proof of Theorem \ref{thm:Difference formula for IJP} and Theorem \ref{thm:main result 1}.

The content of this article is as follows. 
In Section 2, we prove the raising type twisted Pieri formulas for Jack polynomials. 
From these twisted Pieri formulas for Jack polynomials, we give yet another proof of Theorem \ref{thm:Difference formula for IJP} in Section 3 and another proof of Theorem \ref{thm:main result 1} in Section 4. 
We mention some future works for raising and falling type twisted Pieri formulas and their applications in Section 5. 
Finally, in Appendix, we derive a differential-difference relation for the binomial type polynomials including Jack and multivariate Bernoulli polynomials, which is a generalization of the falling type twisted Pieri formula (\ref{eq:Phikey lemma 2}) from the Jack polynomial to the binomial type polynomials.

\section{Raising twisted Pieri formulas for Jack polynomials}
To prove Theorem \ref{thm:twisted Pieri}, we refer the following summation.
\begin{lem}[\cite{Sh2} Lemma 2.1]
\label{thm:Lemma 2.1}
For any $I \subseteq [r]$ and $\mathbf{x}=(x_{1},\ldots ,x_{r}) \in \mathbb{C}^{r}$, we have
\begin{align}
& \sum_{i \in I}
   \left(x_{i}+1+\frac{d}{2}(r-i)\right)
   A_{-,i,I\setminus {i}}^{(d)}(\mathbf{x}+\epsilon_{i})
   A_{+,i,I\setminus {i}}^{(d)}(\mathbf{x}) \nonumber \\
\label{eq:Mysterious sum}
   & \quad -\sum_{i \in I}
   \left(x_{i}+\frac{d}{2}(r-i)\right)
   A_{+,i,I\setminus {i}}^{(d)}(\mathbf{x}-\epsilon_{i})
   A_{-,i,I\setminus {i}}^{(d)}(\mathbf{x})         
      =|I|,
\end{align}
where 
$$
A_{\pm ,i,I\setminus {i}}^{(d)}(\mathbf{x})
   :=
   \prod_{j \in I\setminus {i}}\frac{x_{i}-x_{j}-\frac{d}{2}(i-j)\pm \frac{d}{2}}{x_{i}-x_{j}-\frac{d}{2}(i-j)}.
$$
\end{lem}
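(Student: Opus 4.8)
The plan is to strip the statement down to an elementary identity among rational functions and then prove it by a standard poles-and-degree argument. First I would pass to the shifted variables $v_i := x_i + \frac{d}{2}(r-i)$ and abbreviate $s := \frac{d}{2}$, under which the building blocks become
$$
A_{\pm,i,I\setminus i}^{(d)}(\mathbf{x}) = \prod_{j\in I\setminus i}\frac{v_i - v_j \pm s}{v_i - v_j},
$$
and the shift $\mathbf{x}\mapsto\mathbf{x}\pm\epsilon_i$ turns into $v_i\mapsto v_i\pm 1$ with all other $v_j$ fixed, while $x_i+1+\frac{d}{2}(r-i)=v_i+1$ and $x_i+\frac{d}{2}(r-i)=v_i$. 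Since no index outside $I$ ever appears, I may treat $\{v_i\}_{i\in I}$ as independent indeterminates and assume $I=\{1,\dots,n\}$ with $n=|I|$; writing $\prod_{j\ne i}$ for $\prod_{j\in I\setminus i}$, the claim becomes $F(\mathbf{v})=n$ identically, where $F=\sum_i\phi_i-\sum_i\psi_i$ with
$$
\phi_i = (v_i+1)\prod_{j\ne i}\frac{(v_i-v_j+1-s)(v_i-v_j+s)}{(v_i-v_j+1)(v_i-v_j)},\qquad \psi_i = v_i\prod_{j\ne i}\frac{(v_i-v_j-1+s)(v_i-v_j-s)}{(v_i-v_j-1)(v_i-v_j)}.
$$

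Second, I would show that $F$ is a polynomial, i.e.\ that its apparent poles all cancel. The denominators confine the possible simple poles to the hyperplanes $v_i-v_j\in\{0,\pm 1\}$. On $v_i=v_j$ only the two terms indexed by $i$ and $j$ inside a \emph{single} sum are singular, and a residue computation shows their residues are opposite, because one carries the factor $\tfrac{1}{v_i-v_j}$ and the other $\tfrac{1}{v_j-v_i}$; thus these poles cancel within each sum separately. On $v_i=v_j-1$ the singular contributions come from $\phi_i$ in the first sum and from $\psi_j$ in the second sum; computing both residues, the surviving factors $\prod_{k\ne i,j}$ coincide and the scalar prefactors match up to the sign produced by $F=\sum\phi-\sum\psi$, so the residues again cancel. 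The hyperplanes $v_i=v_j+1$ are symmetric. Hence $F$ has no poles and is a polynomial.

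Third, I would pin down its degree. Regarding $F$ as a polynomial in a single $v_i$ with the remaining variables fixed and generic, each ratio tends to $1$ as $v_i\to\infty$, so $\phi_i\sim v_i$ and $\psi_i\sim v_i$ while every term indexed by $k\ne i$ stays bounded (its only $v_i$-dependent factor tends to $1$). Thus the linear growth of $\phi_i$ and $\psi_i$ cancels and $F$ is bounded as $v_i\to\infty$; a bounded polynomial in $v_i$ is constant in $v_i$, and since $i$ was arbitrary, $F$ is independent of every variable. Finally I would evaluate the constant through the scaling limit $v_i=t\,b_i$ with distinct $b_i$ and $t\to\infty$: expanding each ratio as $1+\frac{s(1-s)}{t^2(b_i-b_j)^2}+O(t^{-3})$ gives $\phi_i=v_i+1+O(t^{-1})$ and $\psi_i=v_i+O(t^{-1})$, so $\phi_i-\psi_i=1+O(t^{-1})$ and $F=\sum_{i\in I}(\phi_i-\psi_i)\to|I|$; being constant, $F=|I|$.

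The hard part will be the residue matching on $v_i=v_j\mp 1$ in the second step: unlike the diagonal $v_i=v_j$, the cancellation there is genuinely between the two different sums, so one must carefully track both the leftover rational factors $\prod_{k\ne i,j}$ and the overall sign and check they agree exactly. (The identity is first proved for generic $s=\tfrac{d}{2}$ and then extends to all $d$ by polynomiality in $d$.) Everything else is routine bookkeeping.
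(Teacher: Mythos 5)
Your argument is correct, and it is worth noting that the present paper does not actually prove Lemma \ref{thm:Lemma 2.1}: it is imported verbatim from \cite{Sh2}, so your write-up supplies a proof where this paper only gives a citation. The reduction to the variables $v_i = x_i + \frac{d}{2}(r-i)$ is exactly right (the shift $\mathbf{x}\mapsto\mathbf{x}\pm\epsilon_i$ becomes $v_i\mapsto v_i\pm 1$, and only the $v_i$ with $i\in I$ appear), and the three-step scheme — pole cancellation, boundedness in each $v_i$, evaluation of the constant in a scaling limit — closes. The step you flag as delicate does work: writing $\Pi := \prod_{k\in I\setminus\{i,j\}}\frac{(v_j-v_k-s)(v_j-v_k-1+s)}{(v_j-v_k)(v_j-v_k-1)}$, the residue in $v_i$ of $\phi_i$ along $v_i=v_j-1$ equals $v_j\,s(s-1)\,\Pi$ (the prefactor $v_i+1$ becomes $v_j$, and the $k\neq j$ factors of $\phi_i$ evaluated at $v_i=v_j-1$ reproduce exactly the $k\neq i$ factors of $\psi_j$), while the residue of $\psi_j$ is also $v_j\,s(s-1)\,\Pi$ because of the sign in $(v_i-v_j+1)=-(v_j-v_i-1)$; since these enter $F=\sum\phi-\sum\psi$ with opposite signs, they cancel, and the hyperplanes $v_i=v_j+1$ follow by exchanging $i$ and $j$. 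Two minor remarks: the identity must be read as one of rational functions (the phrase ``for any $\mathbf{x}\in\mathbb{C}^r$'' tacitly excludes the poles of the $A$'s), which is precisely what your argument establishes; and the parenthetical about generic $s$ is unnecessary, since the pole-cancellation and degree arguments are uniform in $s$ — an accidental cancellation of a numerator against a denominator only removes a pole, which cannot hurt the conclusion that $F$ is a polynomial. A consistency check at $|I|=2$, where $F$ collapses to $\frac{1}{w}\left[(w+1-s)(w+s)-(w-1+s)(w-s)\right]=2$ with $w=v_1-v_2$, confirms the normalization $F=|I|$.
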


\noindent
{\bf{Proof of Theorem \ref{thm:twisted Pieri}}} 
We prove Theorem \ref{thm:twisted Pieri} by induction on $l$. 
Since the proofs of (\ref{eq:Phi twisted+}) and (\ref{eq:Phi twisted+}) are similar, we prove only (\ref{eq:Phi twisted+}). 
From the property of the Sekiguchi operator (\ref{eq:Sekiguchi gen}), the case of $l=0$ holds;  
\begin{align}
S_{r}^{(d)}(u;\mathbf{z})\Phi_{\mathbf{k}}^{(d)}(\mathbf{z})
   &=
   \Phi_{\mathbf{k}}^{(d)}(\mathbf{z})I_{r}^{(d)}(u;\mathbf{k}). 
\end{align}

If $l=1$, then
\begin{align}
& \left[(-\ad{|\mathbf{z}|})S_{r}^{(d)}(u;\mathbf{z})\right]\Phi_{\mathbf{k}}^{(d)}(\mathbf{z}) \nonumber \\
   & \quad =
   -|\mathbf{z}|\Phi_{\mathbf{k}}^{(d)}(\mathbf{z})I_{r}^{(d)}(u;\mathbf{k})
   +S_{r}^{(d)}(u;\mathbf{z})\sum_{i=1}^{r}
         \Phi_{\mathbf{k}+\epsilon_{i}}^{(d)}(\mathbf{z})
         A_{+,i}^{(d)}(\mathbf{k}) \nonumber \\
   & \quad =
   \sum_{i=1}^{r}
      \Phi_{\mathbf{k}+\epsilon_{i}}^{(d)}(\mathbf{z})
      A_{+,i}^{(d)}(\mathbf{k})
      (I_{r}^{(d)}(u;\mathbf{k}+\epsilon_{i})-I_{r}^{(d)}(u;\mathbf{k})) \nonumber \\
   & \quad =
   \sum_{\substack{J \subseteq [r], \\ |J|=1}}
      \Phi_{\mathbf{k}+\epsilon_{J}}^{(d)}(\mathbf{z})
      I_{J^{c}}^{(d)}(u;\mathbf{k})
      A_{+,J}^{(d)}(\mathbf{k}). \nonumber 
\end{align}
Assume the $n=l$ case holds. 
Hence, 
\begin{align}
& \left[\frac{(-\ad{|\mathbf{z}|})^{l+1}}{(l+1)!}S_{r}^{(d)}(u;\mathbf{z})\right]\Phi_{\mathbf{k}}^{(d)}(\mathbf{z}) \nonumber \\
   & \quad =
   -\frac{1}{l+1}|\mathbf{z}|\left[\frac{(-\ad{|\mathbf{z}|})^{l}}{l!}S_{r}^{(d)}(u,\mathbf{z})\right]\Phi_{\mathbf{k}}^{(d)}(\mathbf{z}) \nonumber \\
   & \quad \quad +\left[\frac{(-\ad{|\mathbf{z}|})^{l}}{l!}S_{r}^{(d)}(u,\mathbf{z})\right]\frac{1}{l+1}|\mathbf{z}|\Phi_{\mathbf{k}}^{(d)}(\mathbf{z}) \nonumber \\
   & \quad =
   \frac{1}{l+1}
   \sum_{\substack{J \subseteq [r], \\ |J|=l}}
   \sum_{\nu =1}^{r} \nonumber \\
   & \quad \quad \cdot \left\{
      \Phi_{\mathbf{k}+\epsilon_{J\sqcup \{\nu \}}}^{(d)}(\mathbf{z})
      A_{+,\nu }^{(d)}(\mathbf{k}+\epsilon _{J})
      I_{J^{c}}^{(d)}(u,\mathbf{k})
      A_{+,J}^{(d)}(\mathbf{k}) \right. \nonumber \\
      & \quad \quad \quad \left. -
      \Phi_{\mathbf{k}+\epsilon_{J\sqcup \{\nu \}}}^{(d)}(\mathbf{z})
      I_{J^{c}}^{(d)}(u,\mathbf{k}+\epsilon _{\nu })
      A_{+,J}^{(d)}(\mathbf{k}+\epsilon _{\nu })
      A_{+,\nu }^{(d)}(\mathbf{k})\right\}. \nonumber
\end{align}
From a simple calculation, we have
\begin{align}
& \left[\frac{(-\ad{|\mathbf{z}|})^{l+1}}{(l+1)!}S_{r}^{(d)}(u,\mathbf{z})\right]\Phi_{\mathbf{k}}^{(d)}(\mathbf{z}) \nonumber \\
   & \quad =
   \sum_{\substack{I \subset [r] \\ |I|=l+1}}
      \frac{1}{l+1}
      \Phi_{\mathbf{k}+\epsilon _{I }}^{(d)}(\mathbf{z})
      I_{I^{c}}^{(d)}(u,\mathbf{k})
      A_{+,I}^{(d)}(\mathbf{k}) \nonumber \\
   & \quad \quad \cdot 
      \sum_{i \in I}
      \left\{
      \left(s_{i}+1+\frac{d}{2}u\right)
      A_{-,\{i\},I\setminus \{i\}}^{(d)}(\mathbf{k}+\epsilon _{i})
      A_{+,\{i\},I\setminus \{i\}}^{(d)}(\mathbf{k})
      \right. \nonumber \\
      & \quad \quad \quad \quad \left.
      -\left(s_{i}+\frac{d}{2}u\right)
      A_{-,\{i\},I\setminus \{i\}}^{(d)}(\mathbf{k})
      A_{+,\{i\},I\setminus \{i\}}^{(d)}(\mathbf{k}-\epsilon _{i})
      \right\} \nonumber \\
& \quad =
   \sum_{\substack{I \subset [r] \\ |I|=l+1}}
      \Phi_{\mathbf{k}+\epsilon _{I }}^{(d)}(\mathbf{z})
      I_{I^{c}}^{(d)}(u,\mathbf{k})
      A_{+,I}^{(d)}(\mathbf{k}), \nonumber           
\end{align}
where 
$$
s_{j}:=k_{j}+\frac{d}{2}(r-j). 
$$
The summation
\begin{align}
& \sum_{i \in I}
     \left\{
     \left(s_{i}+1+\frac{d}{2}u\right)
     A_{-,\{i\},I\setminus \{i\}}^{(d)}(\mathbf{k}+\epsilon _{i})
     A_{+,\{i\},I\setminus \{i\}}^{(d)}(\mathbf{k})
     \right. \nonumber \\
     & \quad \quad \quad \quad \left.
     -\left(s_{i}+\frac{d}{2}u\right)
     A_{-,\{i\},I\setminus \{i\}}^{(d)}(\mathbf{k})
     A_{+,\{i\},I\setminus \{i\}}^{(d)}(\mathbf{k}-\epsilon _{i})
     \right\}
   =l+1 \nonumber
\end{align}
is the summation (\ref{eq:Mysterious sum}) exactly. 
Then we obtain the conclusion. \qed

By comparing the coefficients for $u^{r-l}$ of the formulas (\ref{eq:Phi twisted+}) and (\ref{eq:Phi twisted+}), 
we obtain the following twisted Pieri type formulas. 
\begin{cor}
For any $\mathbf{z} \in \mathbb{C}^{r}$ and $l=0,1,\ldots, r$, 
\begin{align}
\label{eq:Phi twisted+}
& \left(\frac{d}{2}\right)^{l}
\left[\frac{(-\ad{|\mathbf{z}|})^{l}}{l!}H_{r,l}^{(d)}(\mathbf{z})\right]\Phi_{\mathbf{k}}^{(d)}(\mathbf{z})
   =
   \sum_{\substack{J \subseteq [r], |J|=l \\ \mathbf{k}+\epsilon _{J } \in \mathcal{P}}}
      \Phi_{\mathbf{k}+\epsilon _{J }}^{(d)}(\mathbf{z})
      A_{+,J}^{(d)}(\mathbf{k}), \\
& \left(\frac{d}{2}\right)^{l}
\left[\frac{(-\ad{|\mathbf{z}|})^{l}}{l!}H_{r,l}^{(d)}(\mathbf{z})\right]\Psi_{\mathbf{k}}^{(d)}(\mathbf{z}) \nonumber \\
\label{eq:Psi twisted+}
    & \quad =\!\!\!\!\!\!
         \sum_{\substack{J \subseteq [r], |J|=l \\ \mathbf{k}+\epsilon _{J } \in \mathcal{P}}}\!\!\!\!\!\!
         \Psi_{\mathbf{k}+\epsilon _{J}}^{(d)}(\mathbf{z})
            A_{-,J}^{(d)}(\mathbf{k}+\epsilon _{J})
            \prod_{j \in J}\left(k_{j}+1+\frac{d}{2}(r-j)\right).
\end{align}
\end{cor}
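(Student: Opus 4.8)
The plan is to obtain the Corollary directly from Theorem~\ref{thm:twisted Pieri} by regarding both raising formulas as identities of polynomials in the spectral variable $u$ and extracting the coefficient of $u^{r-l}$ on each side. Since $u$ is a scalar parameter that commutes with $\ad{|\mathbf{z}|}$ and with the action on $\Phi_{\mathbf{k}}^{(d)}(\mathbf{z})$ and $\Psi_{\mathbf{k}}^{(d)}(\mathbf{z})$, the extraction of a $u$-coefficient commutes with applying the operator $(-\ad{|\mathbf{z}|})^{l}/l!$; the whole argument is therefore pure coefficient bookkeeping.

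First I would treat the left-hand sides. Writing $S_{r}^{(d)}(u;\mathbf{z})=\sum_{p=0}^{r}H_{r,p}^{(d)}(\mathbf{z})\,u^{r-p}$ and applying $(-\ad{|\mathbf{z}|})^{l}/l!$ term by term, the left side of the $\Phi$-formula in Theorem~\ref{thm:twisted Pieri} equals $\sum_{p=0}^{r}\big[\tfrac{(-\ad{|\mathbf{z}|})^{l}}{l!}H_{r,p}^{(d)}(\mathbf{z})\big]\Phi_{\mathbf{k}}^{(d)}(\mathbf{z})\,u^{r-p}$. The monomial $u^{r-l}$ occurs only for $p=l$, so its coefficient is exactly $\big[\tfrac{(-\ad{|\mathbf{z}|})^{l}}{l!}H_{r,l}^{(d)}(\mathbf{z})\big]\Phi_{\mathbf{k}}^{(d)}(\mathbf{z})$, and identically for $\Psi$.

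Next I would treat the right-hand sides, where the only $u$-dependence sits in the factors $I_{J^{c}}^{(d)}(u;\mathbf{k})$. Using the product form $I_{J^{c}}^{(d)}(u;\mathbf{k})=\big(\tfrac{2}{d}\big)^{|J|}\prod_{i\in J^{c}}\big(u+r-i+\tfrac{2}{d}k_{i}\big)$ and recalling $|J|=l$, this is a polynomial in $u$ of degree $|J^{c}|=r-l$ whose product part is monic; hence its top-degree coefficient, i.e. the coefficient of $u^{r-l}$, is the clean constant $\big(\tfrac{2}{d}\big)^{l}$. All the surviving factors $\Phi_{\mathbf{k}+\epsilon_{J}}^{(d)}(\mathbf{z})$, $A_{+,J}^{(d)}(\mathbf{k})$ (and, for the $\Psi$-case, $A_{-,J}^{(d)}(\mathbf{k}+\epsilon_{J})$ together with $\prod_{j\in J}(k_{j}+1+\tfrac{d}{2}(r-j))$) are independent of $u$, so the $u^{r-l}$ coefficient of the right side is $\big(\tfrac{2}{d}\big)^{l}$ times the asserted sum over $|J|=l$.

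Equating the two coefficients and multiplying through by $\big(\tfrac{d}{2}\big)^{l}$ then produces the two displayed identities of the Corollary. The constraint $\mathbf{k}+\epsilon_{J}\in\mathcal{P}$ in the summation is only a cosmetic restriction: by the remark after the Pieri formulas, $A_{+,J}^{(d)}(\mathbf{k})$ (resp. the $\Psi$-coefficient) vanishes as soon as $\mathbf{k}+\epsilon_{J}\notin\mathcal{P}$, so those terms of the unrestricted sum $\sum_{|J|=l}$ disappear automatically. There is no genuine obstacle in this argument; the single point that must be checked with care is that $u^{r-l}$ really is the top power of $u$ appearing in $I_{J^{c}}^{(d)}(u;\mathbf{k})$ when $|J|=l$, for it is precisely this that makes the extracted coefficient the scalar $(2/d)^{l}$ and not a genuine polynomial in $\mathbf{k}$.
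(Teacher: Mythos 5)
Your proposal is correct and is precisely the argument the paper intends: the Corollary is obtained from Theorem~\ref{thm:twisted Pieri} by extracting the coefficient of $u^{r-l}$, noting that $H_{r,l}^{(d)}(\mathbf{z})$ is the $u^{r-l}$-coefficient of $S_{r}^{(d)}(u;\mathbf{z})$ while each $I_{J^{c}}^{(d)}(u;\mathbf{k})$ with $|J|=l$ is a degree-$(r-l)$ polynomial in $u$ with leading coefficient $(2/d)^{l}$, and then multiplying by $(d/2)^{l}$. Your additional observation that the restriction $\mathbf{k}+\epsilon_{J}\in\mathcal{P}$ is automatic because $A_{+,J}^{(d)}(\mathbf{k})$ vanishes otherwise correctly fills in the one detail the paper leaves implicit.
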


\section{Yet another proof of difference equations for interpolation Jack polynomials}
In the previous paper \cite{Sh2}, we calculated 
$$
S_{r}^{(d)}(u;\mathbf{z})\Phi_{\mathbf{x}}^{(d)}(\mathbf{1}+\mathbf{z})
$$
in two different ways and provided another proof of Theorem\,\ref{thm:Difference formula for IJP} proved by Knop-Sahi \cite{KS}. 
In this section, we compute
$$ 
[e^{\ad{|\mathbf{z}|}}S_{r}^{(d)}(u,\mathbf{z})]e^{|\mathbf{z}|}\Psi_{\mathbf{k}}^{(d)}(\mathbf{z})
$$
in two different ways and give yet another proof of Theorem\,\ref{thm:Difference formula for IJP}.

\noindent
{\bf{Proof of Theorem \ref{thm:Difference formula for IJP}}} 
From the binomial formula (\ref{eq:raising binomial}) and (\ref{eq:Sekiguchi gen}), 
\begin{align}
[e^{\ad{|\mathbf{z}|}}S_{r}^{(d)}(u,\mathbf{z})]e^{|\mathbf{z}|}\Psi_{\mathbf{k}}^{(d)}(\mathbf{z})
   &=
   e^{|\mathbf{z}|}S_{r}^{(d)}(u,\mathbf{z})e^{-|\mathbf{z}|}e^{|\mathbf{z}|}\Psi_{\mathbf{k}}^{(d)}(\mathbf{z}) \nonumber \\
   &=
   e^{|\mathbf{z}|}S_{r}^{(d)}(u,\mathbf{z})\Psi_{\mathbf{k}}^{(d)}(\mathbf{z}) \nonumber \\
   &=
   e^{|\mathbf{z}|}\Psi_{\mathbf{k}}^{(d)}(\mathbf{z})I_{r}^{(d)}(u,\mathbf{k}) \nonumber \\
   &=
      \sum_{\mathbf{k} \subset \mathbf{x}}\Psi_{\mathbf{x}}^{(d)}(\mathbf{z})
      \frac{P_{\mathbf{k}}^{\mathrm{ip}}\left(\mathbf{x}+\frac{d}{2}\delta ;\frac{d}{2}\right)}{P_{\mathbf{k}}^{\mathrm{ip}}\left(\mathbf{k}+\frac{d}{2}\delta ;\frac{d}{2}\right)}I_{r}^{(d)}(u;\mathbf{k}). \nonumber
\end{align}
On the other hand, since the highest derivative in $H_{r,p}^{(d)}(\mathbf{z})$ has degree $p$, the sum
$$
e^{\ad{|\mathbf{z}|}}S_{r}^{(d)}(u,\mathbf{z})
   =
   \sum_{l\geq 0}
      \frac{(\ad{|\mathbf{z}|})^{l}}{l!}S_{r}^{(d)}(u;\mathbf{z})
$$
terminates after $(\ad{|\mathbf{z}|})^{r}$. 
Then, we have
\begin{align}
& [e^{\ad{|\mathbf{z}|}}S_{r}^{(d)}(u,\mathbf{z})]e^{|\mathbf{z}|}\Psi_{\mathbf{k}}^{(d)}(\mathbf{z}) \nonumber \\
   & \, =
      \sum_{\mathbf{k} \subset \mathbf{x}}
      \frac{P_{\mathbf{k}}^{\mathrm{ip}}\left(\mathbf{x}+\frac{d}{2}\delta ;\frac{d}{2}\right)}{P_{\mathbf{k}}^{\mathrm{ip}}\left(\mathbf{k}+\frac{d}{2}\delta ;\frac{d}{2}\right)}
      [e^{\ad{|\mathbf{z}|}}S_{r}^{(d)}(u;\mathbf{z})]\Psi_{\mathbf{x}}^{(d)}(\mathbf{z}) \nonumber \\
   & \, =
      \sum_{\mathbf{k} \subset \mathbf{x}}
      \frac{P_{\mathbf{k}}^{\mathrm{ip}}\left(\mathbf{x}+\frac{d}{2}\delta ;\frac{d}{2}\right)}{P_{\mathbf{k}}^{\mathrm{ip}}\left(\mathbf{k}+\frac{d}{2}\delta ;\frac{d}{2}\right)}
      \left[\sum_{l=0}^{r}\frac{(\ad{|\mathbf{z}|})^{l}}{l!}S_{r}^{(d)}(u;\mathbf{z})\right]\Psi_{\mathbf{x}}^{(d)}(\mathbf{z}) \nonumber \\
   & \, =
      \sum_{\mathbf{k} \subset \mathbf{x}}
      \frac{P_{\mathbf{k}}^{\mathrm{ip}}\left(\mathbf{x}+\frac{d}{2}\delta ;\frac{d}{2}\right)}{P_{\mathbf{k}}^{\mathrm{ip}}\left(\mathbf{k}+\frac{d}{2}\delta ;\frac{d}{2}\right)} \nonumber \\
   & \cdot 
      \sum_{l=0}^{r}(-1)^{l}
      \sum_{\substack{J \subset [r], |J|=l \\ \mathbf{x}+\epsilon _{J } \in \mathcal{P}}}\!\!\!\!\!\!
         \Psi_{\mathbf{x}+\epsilon _{J}}^{(d)}(\mathbf{z})
            I_{J^{c}}^{(d)}(u,\mathbf{x})
            A_{-,J}^{(d)}(\mathbf{x}+\epsilon _{J})
            \prod_{j \in J}\left(x_{j}+1+\frac{d}{2}(r-j)\right) \nonumber \\
   & \, =
   \sum_{\mathbf{k} \subset \mathbf{x}}
   \Psi_{\mathbf{x}}^{(d)}(\mathbf{z}) \nonumber \\
   & \quad \cdot 
   \sum_{\substack{J \subset [r]}}
   (-1)^{|J|}
   I_{J^{c}}^{(d)}(u;\mathbf{x}-\epsilon _{J })
   A_{-,J}^{(d)}(\mathbf{x})
   \prod_{j \in J}\left(x_{j}+\frac{d}{2}(r-j)\right)
      \frac{P_{\mathbf{k}}^{\mathrm{ip}}\left(\mathbf{x}-\epsilon _{J }+\frac{d}{2}\delta ;\frac{d}{2}\right)}{P_{\mathbf{k}}^{\mathrm{ip}}\left(\mathbf{k}+\frac{d}{2}\delta ;\frac{d}{2}\right)}. \nonumber 
\end{align}
Here the first and third equalities follow from the binomial formula (\ref{eq:raising binomial}) and the twisted Pieri formula (\ref{eq:Psi twisted+}). 
By the definition of $I_{J^{c}}^{(d)}(u;\mathbf{x})$, $I_{J^{c}}^{(d)}(u;\mathbf{x}-\epsilon _{J })$ is equal to $I_{J^{c}}^{(d)}(u;\mathbf{x})$. 
Then for any $\mathbf{x} \in \mathcal{P}$ (\ref{eq:diff eq for IJP}) holds.

Since the difference formula  is a relation for rational function of $(x_{1}, \ldots ,x_{r})$, it is enough to prove when variable $\mathbf{x} \in \mathcal{P}$. 
Then we obtain the conclusion. \qed

\section{Another proof of Pieri formulas for interpolation Jack polynomials}
We proved Theorem \ref{thm:main result 1} to compute
$$
[e^{\ad{|\partial_{\mathbf{z}}|}}S_{r}^{(d)}(u;\mathbf{z})]\Phi_{\mathbf{x}}^{(d)}(\mathbf{1}+\mathbf{z})
   =
   e^{|\partial_{\mathbf{z}}|}S_{r}^{(d)}(u;\mathbf{z})e^{-|\partial_{\mathbf{z}}|}\Phi_{\mathbf{x}}^{(d)}(\mathbf{1}+\mathbf{z})
$$
in two different ways \cite{Sh2}. 
Here we calculate 
$$ 
S_{r}^{(d)}(u,\mathbf{z})e^{|\mathbf{z}|}\Phi_{\mathbf{k}}^{(d)}(\mathbf{z})
$$
in two ways and derive Theorem \ref{thm:main result 1}.

\noindent
{\bf{Proof of Theorem \ref{thm:main result 1}}} 
As with the proof of Theorem \ref{thm:Difference formula for IJP}, it is enough to prove (\ref{eq:Pieri for IJP}) for $\mathbf{x} \in \mathcal{P}$. 
From the binomial formula (\ref{eq:raising binomial 2}) and (\ref{eq:Sekiguchi gen}), we have 
\begin{align}
S_{r}^{(d)}(u,\mathbf{z})e^{|\mathbf{z}|}\Phi_{\mathbf{k}}^{(d)}(\mathbf{z})
   &=
      \sum_{\mathbf{k} \subset \mathbf{x}}
      \frac{P_{\mathbf{k}}^{\mathrm{ip}}\left(\mathbf{x}+\frac{d}{2}\delta ;\frac{d}{2}\right)}{P_{\mathbf{k}}\left(\mathbf{1} ;\frac{d}{2}\right)}
      S_{r}^{(d)}(u,\mathbf{z})\Psi_{\mathbf{x}}^{(d)}(\mathbf{z}) \nonumber \\
   &=
      \sum_{\mathbf{k} \subset \mathbf{x}}
      \Psi_{\mathbf{x}}^{(d)}(\mathbf{z})
      \frac{P_{\mathbf{k}}^{\mathrm{ip}}\left(\mathbf{x}+\frac{d}{2}\delta ;\frac{d}{2}\right)}{P_{\mathbf{k}}\left(\mathbf{1} ;\frac{d}{2}\right)}
      I_{r}^{(d)}(u,\mathbf{x}). \nonumber 
\end{align}
On the other hand, a simple calculation shows that 
$$
S_{r}^{(d)}(u,\mathbf{z})e^{|\mathbf{z}|}
   =
   e^{|\mathbf{z}|}e^{-|\mathbf{z}|}S_{r}^{(d)}(u,\mathbf{z})e^{|\mathbf{z}|}
   =
   e^{|\mathbf{z}|}[e^{-\ad{|\mathbf{z}|}}S_{r}^{(d)}(u,\mathbf{z})]
$$
Then, from the twisted Pieri formula (\ref{eq:Psi twisted+}) and binomial formula (\ref{eq:raising binomial}), we have  
\begin{align}
S_{r}^{(d)}(u,\mathbf{z})e^{|\mathbf{z}|}\Phi_{\mathbf{k}}^{(d)}(\mathbf{z}) 
   &=
   e^{|\mathbf{z}|}[e^{-\ad{|\mathbf{z}|}}S_{r}^{(d)}(u,\mathbf{z})]\Phi_{\mathbf{k}}^{(d)}(\mathbf{z}) \nonumber \\
   &=
   e^{|\mathbf{z}|}
   \left[\sum_{l=0}^{r}\frac{(-\ad{|\mathbf{z}|})^{l}}{l!}S_{r}^{(d)}(u;\mathbf{z})\right]\Phi_{\mathbf{x}}^{(d)}(\mathbf{z}) \nonumber \\
   &=
      \sum_{l=0}^{r}
   \sum_{\substack{J \subset [r], |J|=l \\ \mathbf{k}+\epsilon _{J } \in \mathcal{P}}}
      e^{|\mathbf{z}|}\Phi_{\mathbf{k}+\epsilon _{J }}^{(d)}(\mathbf{z})
      I_{J^{c}}^{(d)}(u,\mathbf{k})
      A_{+,J}^{(d)}(\mathbf{k}) \nonumber \\
   &=
      \sum_{\mathbf{k} \subset \mathbf{x}}
      \Psi_{\mathbf{x}}^{(d)}(\mathbf{z}) \nonumber \\
   & \quad \cdot 
      \sum_{\substack{J \subseteq [r], \\ \mathbf{k}+\epsilon_{J} \in \mathcal{P}}}
      \frac{P_{\mathbf{k}+\epsilon _{J }}^{\mathrm{ip}}\left(\mathbf{x}+\frac{d}{2}\delta ;\frac{d}{2}\right)}{P_{\mathbf{k}+\epsilon _{J }}\left(\mathbf{1} ;\frac{d}{2}\right)}   
      I_{J^{c}}^{(d)}\left(u;\mathbf{k}\right)A_{+,J}^{(d)}(\mathbf{k}). \nonumber
\end{align}
By comparing of the coefficients for $\Psi_{\mathbf{x}}^{(d)}(\mathbf{z})$, we obtain the conclusion. \qed

\section{Concluding remarks}
In this article and \cite{Sh2}, we propose the falling and raising type twisted Pieri formulas and (\ref{eq:Phi twisted-}), (\ref{eq:Psi twisted-}) and (\ref{eq:Phi twisted+}), (\ref{eq:Psi twisted+}), and apply these formulas to the proofs of Theorem \ref{thm:Difference formula for IJP} and Theorem \ref{thm:main result 1}. 
As the next stage, it is desirable to write down explicitly the next mixed-type twisted Pieri formulas 
\begin{align}
\left[\frac{(\ad{|\mathbf{z}|})^{m}}{m!}\frac{(\ad{|\partial_{\mathbf{z}}|})^{n}}{n!}S_{r}^{(d)}(u;\mathbf{z})\right]\Phi_{\mathbf{k}}^{(d)}(\mathbf{z})
   &=
   ?, \nonumber \\
\left[\frac{(\ad{|\mathbf{z}|})^{m}}{m!}\frac{(\ad{|\partial_{\mathbf{z}}|})^{n}}{n!}S_{r}^{(d)}(u;\mathbf{z})\right]\Psi_{\mathbf{k}}^{(d)}(\mathbf{z})
   &=
   ?. \nonumber
\end{align}
We expect these more difficult twisted Pieri formulas to contribute to the description of some formulas for type $BC$.  

\appendix

\section{A twisted Pieri formula for binomial type polynomials}
We proved the following intertwining relations for a kernel function of Jack polynomials \cite{VK}, \cite{L}
$$
{_{0}\mathcal{F}_0}^{(d)}\left( \mathbf{z},\mathbf{u}\right)
   :=
   \sum_{\mathbf{m} \in \mathcal{P}}
   \Psi_{\mathbf{m}}^{(d)}(\mathbf{z})\Phi_{\mathbf{m}}^{(d)}(\mathbf{u})
   =
   \sum_{\mathbf{m} \in \mathcal{P}}
   \Phi_{\mathbf{m}}^{(d)}(\mathbf{z})\Psi_{\mathbf{m}}^{(d)}(\mathbf{u}).
$$
\begin{lem}[\cite{Sh2}]
\label{thm:intertwining rel}
For any $l=0,1,\ldots ,r$, we have 
\begin{align}
\label{eq:0F0 kernel rel}
\left(\frac{d}{2}\right)^{l}
\left[\frac{(\ad{|\partial_{\mathbf{z}}|})^{l}}{l!}H_{r,l}^{(d)}(\mathbf{z})\right]
{_{0}\mathcal{F}_0}^{(d)}\left( \mathbf{z},\mathbf{u}\right)
   =
   {_{0}\mathcal{F}_0}^{(d)}\left( \mathbf{z},\mathbf{u}\right)e_{r,l}(\mathbf{u}). 
\end{align}
\end{lem}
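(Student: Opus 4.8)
The plan is to exploit the two representations of the kernel together with the duality between the falling twisted Pieri formula in the $\mathbf{z}$-variables and the raising twisted Pieri formula in the $\mathbf{u}$-variables, thereby transporting the differential operator from the $\mathbf{z}$-side to the $\mathbf{u}$-side, where it collapses to a multiplication operator. First I would write ${_0\mathcal{F}_0}^{(d)}(\mathbf{z},\mathbf{u})=\sum_{\mathbf{m}\in\mathcal{P}}\Phi_{\mathbf{m}}^{(d)}(\mathbf{z})\Psi_{\mathbf{m}}^{(d)}(\mathbf{u})$ and apply the operator termwise in $\mathbf{z}$. By the falling twisted Pieri formula (\ref{eq:Phikey lemma 2}) with $\mathbf{x}=\mathbf{m}$,
$$\left(\frac{d}{2}\right)^{l}\frac{(\ad{|\partial_{\mathbf{z}}|})^{l}}{l!}H_{r,l}^{(d)}(\mathbf{z})\Phi_{\mathbf{m}}^{(d)}(\mathbf{z})=\sum_{\substack{J\subseteq[r],|J|=l\\ \mathbf{m}-\epsilon_{J}\in\mathcal{P}}}\Phi_{\mathbf{m}-\epsilon_{J}}^{(d)}(\mathbf{z})A_{-,J}^{(d)}(\mathbf{m})\prod_{j\in J}\left(m_{j}+\frac{d}{2}(r-j)\right).$$

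Substituting this and reindexing by $\mathbf{n}=\mathbf{m}-\epsilon_{J}$ — a bijection between pairs $(\mathbf{m},J)$ with $\mathbf{m},\mathbf{m}-\epsilon_{J}\in\mathcal{P}$ and pairs $(\mathbf{n},J)$ with $\mathbf{n},\mathbf{n}+\epsilon_{J}\in\mathcal{P}$, under which $m_{j}=n_{j}+1$ for $j\in J$ — the coefficient of $\Phi_{\mathbf{n}}^{(d)}(\mathbf{z})$ becomes
$$\sum_{\substack{J\subseteq[r],|J|=l\\ \mathbf{n}+\epsilon_{J}\in\mathcal{P}}}\Psi_{\mathbf{n}+\epsilon_{J}}^{(d)}(\mathbf{u})\,A_{-,J}^{(d)}(\mathbf{n}+\epsilon_{J})\prod_{j\in J}\left(n_{j}+1+\frac{d}{2}(r-j)\right),$$
which is exactly the right-hand side of the raising twisted Pieri formula (\ref{eq:Psi twisted+}) applied to $\Psi_{\mathbf{n}}^{(d)}(\mathbf{u})$. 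Since the operator in $\mathbf{u}$ commutes past the $\mathbf{u}$-constant factors $\Phi_{\mathbf{n}}^{(d)}(\mathbf{z})$, the whole expression equals $\left(\tfrac{d}{2}\right)^{l}\tfrac{(-\ad{|\mathbf{u}|})^{l}}{l!}H_{r,l}^{(d)}(\mathbf{u})\,{_0\mathcal{F}_0}^{(d)}(\mathbf{z},\mathbf{u})$; in words, the falling operator in $\mathbf{z}$ and the raising operator in $\mathbf{u}$ agree on the kernel.

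The crux is then the operator identity
$$\left(\frac{d}{2}\right)^{l}\frac{(-\ad{|\mathbf{u}|})^{l}}{l!}H_{r,l}^{(d)}(\mathbf{u})=e_{r,l}(\mathbf{u}),$$
read as multiplication. To prove it I would observe that $H_{r,l}^{(d)}(\mathbf{u})$ has leading (top derivative-order) part $\left(\frac{2}{d}\right)^{l}\sum_{|S|=l}\prod_{k\in S}u_{k}\partial_{u_{k}}$, every other summand having derivative order strictly less than $l$; that $\ad{|\mathbf{u}|}$ annihilates multiplication operators and sends $u_{k}\partial_{u_{k}}\mapsto -u_{k}$, hence lowers the derivative order by exactly one; and therefore only the leading part can survive $(-\ad{|\mathbf{u}|})^{l}$. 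A Leibniz count then gives $(-\ad{|\mathbf{u}|})^{l}\prod_{k\in S}u_{k}\partial_{u_{k}}=l!\prod_{k\in S}u_{k}$, so after multiplying by $\left(\frac{d}{2}\right)^{l}/l!$ the sum over $|S|=l$ reduces precisely to $e_{r,l}(\mathbf{u})$.

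Combining the two steps yields that the left-hand side equals $e_{r,l}(\mathbf{u})\,{_0\mathcal{F}_0}^{(d)}(\mathbf{z},\mathbf{u})$, which is the claim. The main obstacle is the operator identity of the previous paragraph: this is where the twist $(-\ad{|\mathbf{u}|})^{l}$ is indispensable, since it is exactly what strips the leading symbol of $H_{r,l}^{(d)}$ down to the multiplication operator $e_{r,l}(\mathbf{u})$. By contrast, the reindexing and the two appeals to the falling and raising twisted Pieri formulas are routine bookkeeping, and I expect no essential difficulty there.
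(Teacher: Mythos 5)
Your argument is correct, but note that the paper does not actually prove this lemma: it is quoted from \cite{Sh2} without proof, so there is no in-paper argument to compare against. What you give is a legitimate self-contained derivation from results available in the paper, and it essentially reverses the route the appendix takes: the paper uses (\ref{eq:0F0 kernel rel}) together with the falling formula (\ref{eq:Phikey lemma 2}) to deduce the Pieri rule for $e_{r,l}(\mathbf{z})\Psi_{\mathbf{m}}^{(d)}(\mathbf{z})$ (Lemma \ref{thm:Jack Psi Pieri}), whereas you combine (\ref{eq:Phikey lemma 2}) with the raising twisted Pieri formula for $\Psi$ (the corollary of Theorem \ref{thm:twisted Pieri}, proved independently in Section 2, so there is no circularity) to deduce (\ref{eq:0F0 kernel rel}). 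Both of your key steps check out. The reindexing $\mathbf{n}=\mathbf{m}-\epsilon_{J}$ does convert the falling sum in $\mathbf{z}$ into the raising sum in $\mathbf{u}$, since $A_{-,J}^{(d)}(\mathbf{m})=A_{-,J}^{(d)}(\mathbf{n}+\epsilon_{J})$ and $m_{j}+\frac{d}{2}(r-j)=n_{j}+1+\frac{d}{2}(r-j)$ for $j\in J$. The operator identity $\left(\frac{d}{2}\right)^{l}\frac{(-\ad{|\mathbf{u}|})^{l}}{l!}H_{r,l}^{(d)}(\mathbf{u})=e_{r,l}(\mathbf{u})$ is also right: in the defining expansion of $H_{r,l}^{(d)}$ the factor $\frac{1}{\Delta}\left(\prod_{i\in I}u_{i}\partial_{u_{i}}\right)\Delta$ is a multiplication operator (consistent with the paper's own computation $H_{r,1}^{(d)}=\frac{2}{d}\sum_{i}u_{i}\partial_{u_{i}}+\frac{r(r-1)}{2}$), so only the $I=\emptyset$ term has derivative order $l$ and survives $(\ad{|\mathbf{u}|})^{l}$, and the Leibniz count $(-\ad{|\mathbf{u}|})^{l}\prod_{k\in S}u_{k}\partial_{u_{k}}=l!\prod_{k\in S}u_{k}$ holds because $\ad{|\mathbf{u}|}$ is a derivation annihilating multiplication operators. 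Two small points worth making explicit: termwise application of the differential operator to the infinite series defining ${_{0}\mathcal{F}_0}^{(d)}$ requires the same formal/convergence caveat the paper uses throughout; and your operator identity, combined with the Section 2 corollary, shows that the raising twisted formulas are literally the classical Pieri rules for multiplication by $e_{r,l}$, which in particular reproves Lemma \ref{thm:Jack Psi Pieri}.
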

In this section, we mention some applications of these intertwining relations (\ref{eq:0F0 kernel rel}) and show that twisted Pieri formula (\ref{eq:Phi twisted-}) can be generalized to the binomial type polynomials including Jack and multivariate Bernoulli polynomials. 
First we give a variation of usual Pieri formulas for the ordinary Jack polynomials \cite{St}, \cite{M}: 
$$
   e_{r,l}\left(\mathbf{z} \right)
   \Phi_{\mathbf{m}}^{(d)}(\mathbf{z})
   =
   \sum_{\substack{J \subseteq [r], |J|=l, \\ \mathbf{m}+\epsilon_{J} \in \mathcal{P}}}
      \Phi_{\mathbf{m}+\epsilon_{J}}^{(d)}(\mathbf{z})
      A_{+,J}^{(d)}(\mathbf{m}) \quad (l=0,1,\ldots, r).
$$
\begin{lem}
\label{thm:Jack Psi Pieri}
For $l=0,1,\ldots, r$, 
\begin{align}
\label{eq:Jack Psi Pieri}
   e_{r,l}\left(\mathbf{z} \right)
   \Psi_{\mathbf{m}}^{(d)}(\mathbf{z})
   =
   \sum_{\substack{J \subseteq [r], |J|=l, \\ \mathbf{m}+\epsilon_{J} \in \mathcal{P}}}
      \Psi_{\mathbf{m}+\epsilon_{J}}^{(d)}(\mathbf{z})
      A_{-,J}^{(d)}(\mathbf{m}+\epsilon_{J})
      \prod_{i \in J}\left(m_{j}+1+\frac{d}{2}(r-j)\right).
\end{align}
In particular, the case of $l=1$ is (\ref{eq:Psi twisted+}) exactly. 
\end{lem}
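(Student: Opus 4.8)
The plan is to reduce (\ref{eq:Jack Psi Pieri}) to a purely operator-theoretic identity. Observe first that the right-hand side of (\ref{eq:Jack Psi Pieri}) coincides \emph{verbatim} with the right-hand side of the $H_{r,l}^{(d)}$-form of the raising twisted Pieri formula established in the corollary to Theorem~\ref{thm:twisted Pieri}, namely
$$
\left(\frac{d}{2}\right)^{l}\left[\frac{(-\ad{|\mathbf{z}|})^{l}}{l!}H_{r,l}^{(d)}(\mathbf{z})\right]\Psi_{\mathbf{m}}^{(d)}(\mathbf{z}) = \sum_{\substack{J\subseteq[r],\,|J|=l,\\ \mathbf{m}+\epsilon_{J}\in\mathcal{P}}}\Psi_{\mathbf{m}+\epsilon_{J}}^{(d)}(\mathbf{z})A_{-,J}^{(d)}(\mathbf{m}+\epsilon_{J})\prod_{j\in J}\left(m_{j}+1+\frac{d}{2}(r-j)\right).
$$
Hence it suffices to prove the operator identity
$$
e_{r,l}(\mathbf{z}) = \left(\frac{d}{2}\right)^{l}\frac{(-\ad{|\mathbf{z}|})^{l}}{l!}H_{r,l}^{(d)}(\mathbf{z})\qquad(l=0,1,\ldots,r),
$$
after which applying both sides to $\Psi_{\mathbf{m}}^{(d)}(\mathbf{z})$ yields (\ref{eq:Jack Psi Pieri}) at once. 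For $l=1$ this is exactly the relation $|\mathbf{z}|=\frac{d}{2}(-\ad{|\mathbf{z}|})H_{r,1}^{(d)}(\mathbf{z})$ recorded before Theorem~\ref{thm:twisted Pieri}, so the content lies in general $l$.

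To establish the operator identity I would use that $\mathcal{D}:=\ad{|\mathbf{z}|}$ is a derivation of the algebra of differential operators with $\mathcal{D}(z_{j})=0$ and $\mathcal{D}(\partial_{z_{j}})=-1$; consequently $\mathcal{D}$ annihilates every multiplication operator and strictly lowers the differential order of any operator, since $\mathcal{D}(z^{\alpha}\partial^{\beta})=-z^{\alpha}\sum_{k}\beta_{k}\partial^{\beta-\epsilon_{k}}$. In the defining expansion of $H_{r,l}^{(d)}(\mathbf{z})$ only the term with $I=\emptyset$ has differential order $l$, and it equals $(2/d)^{l}\sum_{|J|=l}\prod_{j\in J}z_{j}\partial_{z_{j}}$; every remaining term carries a $\Delta$-twist factor $\frac{1}{\Delta(\mathbf{z})}(\prod_{i\in I}z_{i}\partial_{z_{i}})\Delta(\mathbf{z})$ with $|I|\geq 1$, which is a multiplication function, so its differential order is $l-|I|<l$. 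Applying $\mathcal{D}^{l}$ therefore kills all of these lower-order terms and leaves only the leading one.

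For the leading term the multinomial Leibniz rule gives, for each $J$ with $|J|=l$,
$$
\mathcal{D}^{l}\Bigl(\prod_{j\in J}z_{j}\partial_{z_{j}}\Bigr) = l!\prod_{j\in J}\mathcal{D}(z_{j}\partial_{z_{j}}) = (-1)^{l}\,l!\prod_{j\in J}z_{j},
$$
because each of the $l$ commuting factors must be differentiated exactly once (a second hit gives $\mathcal{D}(-z_{j})=0$, and $\mathcal{D}(z_{j}\partial_{z_{j}})=-z_{j}$ is central). Hence $\frac{(-\mathcal{D})^{l}}{l!}(\prod_{j\in J}z_{j}\partial_{z_{j}})=\prod_{j\in J}z_{j}$, and summing over $|J|=l$ yields $\frac{(-\mathcal{D})^{l}}{l!}H_{r,l}^{(d)}(\mathbf{z})=(2/d)^{l}e_{r,l}(\mathbf{z})$; multiplying by $(d/2)^{l}$ proves the operator identity, and therefore (\ref{eq:Jack Psi Pieri}).

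The only delicate point is the order bookkeeping of the second paragraph: one must read $\frac{1}{\Delta(\mathbf{z})}(\prod_{i\in I}z_{i}\partial_{z_{i}})\Delta(\mathbf{z})$ as the \emph{function} $\Delta(\mathbf{z})^{-1}(\prod_{i\in I}z_{i}\partial_{z_{i}})\Delta(\mathbf{z})$, i.e.\ a multiplication operator of order $0$, as is forced by the stated value $H_{r,1}^{(d)}=\frac{2}{d}\sum_{i}z_{i}\partial_{z_{i}}+\frac{r(r-1)}{2}$. This guarantees that the $I\neq\emptyset$ terms genuinely have differential order $<l$ and are wiped out by $\mathcal{D}^{l}$. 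I expect this to be the main—though mild—obstacle; the rest is routine derivation bookkeeping.
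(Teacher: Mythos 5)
Your proposal is correct, but it takes a genuinely different route from the paper's. The paper proves (\ref{eq:Jack Psi Pieri}) by a kernel-function argument: it multiplies ${_{0}\mathcal{F}_0}^{(d)}(\mathbf{z},\mathbf{u})$ by $e_{r,l}(\mathbf{z})$, uses the intertwining relation (\ref{eq:0F0 kernel rel}) to trade this multiplication for the falling-type twisted operator acting in the $\mathbf{u}$-variable, applies the falling twisted Pieri formula (\ref{eq:Phikey lemma 2}) to $\Phi_{\mathbf{m}}^{(d)}(\mathbf{u})$, and re-indexes the double sum; the lemma then drops out by comparing coefficients in the $\Psi\otimes\Phi$ expansion. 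You instead observe that the right-hand side of (\ref{eq:Jack Psi Pieri}) is verbatim the right-hand side of the $\Psi$-formula in the corollary to Theorem~\ref{thm:twisted Pieri}, and reduce everything to the operator identity $e_{r,l}(\mathbf{z})=\left(\frac{d}{2}\right)^{l}\frac{(-\ad{|\mathbf{z}|})^{l}}{l!}H_{r,l}^{(d)}(\mathbf{z})$, which you prove by the derivation argument. That identity is correct and your bookkeeping checks out: $\ad{|\mathbf{z}|}$ annihilates multiplication operators and lowers differential order by one, only the $I=\emptyset$ term of $H_{r,l}^{(d)}$ has order $l$, and $\frac{(-\ad{|\mathbf{z}|})^{l}}{l!}\prod_{j\in J}z_{j}\partial_{z_{j}}=\prod_{j\in J}z_{j}$; your reading of the $\Delta$-twisted factors as multiplication operators is the intended one, as the stated value of $H_{r,1}^{(d)}$ confirms. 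The reduction is not circular, since the corollary is derived from Theorem~\ref{thm:twisted Pieri} independently of the appendix. What your route buys is transparency: it exposes the fact that the twisted operators $\left(\frac{d}{2}\right)^{l}\frac{(-\ad{|\mathbf{z}|})^{l}}{l!}H_{r,l}^{(d)}(\mathbf{z})$ are literally multiplication by $e_{r,l}(\mathbf{z})$, so the corollary's two formulas are the classical Pieri formulas in disguise and Lemma~\ref{thm:Jack Psi Pieri} is a restatement of one of them. What the paper's route buys is reusability: the same kernel-function manipulation is exactly what is needed afterwards to prove the twisted Pieri formula for general binomial type polynomials $f_{\mathbf{m}}^{(d)}$ at the end of the appendix, where no analogue of your operator shortcut is available.
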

\begin{proof}
A simple calculation shows that 
\begin{align}
\sum_{\mathbf{m} \in \mathcal{P}}
   \Phi_{\mathbf{m}}^{(d)}(\mathbf{u})e_{r,l}\left(\mathbf{z} \right)\Psi_{\mathbf{m}}^{(d)}(\mathbf{z})
   &=
   {_{0}\mathcal{F}_0}^{(d)}\left( \mathbf{z},\mathbf{u}\right)e_{r,l}\left(\mathbf{z} \right) \nonumber \\
   &=
   \left(\frac{d}{2}\right)^{l}
\left[\frac{(\ad{|\partial_{\mathbf{u}}|})^{l}}{l!}H_{r,l}^{(d)}(\mathbf{u})\right]
{_{0}\mathcal{F}_0}^{(d)}\left( \mathbf{z},\mathbf{u}\right) \nonumber \\
   &=
\sum_{\mathbf{m} \in \mathcal{P}}
   \Psi_{\mathbf{m}}^{(d)}(\mathbf{z})\left(\frac{d}{2}\right)^{l}\left[\frac{(\ad{|\partial_{\mathbf{u}}|})^{l}}{l!}H_{r,l}^{(d)}(\mathbf{u})\right]\Phi_{\mathbf{m}}^{(d)}(\mathbf{u}) \nonumber \\
   &=
\sum_{\mathbf{m} \in \mathcal{P}}
   \Psi_{\mathbf{m}}^{(d)}(\mathbf{z})
   \sum_{\substack{J \subseteq [r], |J|=l, \\ \mathbf{m}-\epsilon_{J} \in \mathcal{P}}}
      \Phi_{\mathbf{m}-\epsilon_{J}}^{(d)}(\mathbf{z})
      A_{-,J}^{(d)}(\mathbf{m}) \nonumber \\
   & \quad \quad \cdot 
      \prod_{j \in J}\left(m_{j}+\frac{d}{2}(r-j)\right) \nonumber \\
   &=
\sum_{\mathbf{m} \in \mathcal{P}}
   \Phi_{\mathbf{m}}^{(d)}(\mathbf{u})
   \sum_{\substack{J \subseteq [r], |J|=l, \\ \mathbf{m}+\epsilon_{J} \in \mathcal{P}}}
      \Psi_{\mathbf{m}+\epsilon_{J}}^{(d)}(\mathbf{z})
      A_{-,J}^{(d)}(\mathbf{m}+\epsilon_{J}) \nonumber \\
   & \quad \quad \cdot 
      \prod_{j \in J}\left(m_{j}+1+\frac{d}{2}(r-j)\right). \nonumber     
\end{align}
Here the second and fourth equalities follow from (\ref{eq:0F0 kernel rel}) and (\ref{eq:Phikey lemma 2}) respectively. 
\end{proof}

We assume that $F(\mathbf{u})$ is a symmetric function and ${_{0}\mathcal{F}_0}^{(d)}\left( \mathbf{z},\mathbf{u}\right)F(\mathbf{u})$ has the series expansion 
\begin{equation}
\label{eq:gen fnc of binomial type polynomials}
{_{0}\mathcal{F}_0}^{(d)}\left( \mathbf{z},\mathbf{u}\right)F(\mathbf{u})
   =
   \sum_{\mathbf{m} \in \mathcal{P}}
      f_{\mathbf{m}}^{(d)}\left(\mathbf{z}\right)
      \Psi _{\mathbf{m}}^{(d)}(\mathbf{u}) 
\end{equation}
of absolute convergence at $|u_{1}|+\cdots +|u_{r}|\ll 1$. 
Under the assumption, we define {\it{binomial type polynomials}} $f_{\mathbf{m}}^{(d)}(\mathbf{z})$ associated with $F(\mathbf{u})$ by the generating function (\ref{eq:gen fnc of binomial type polynomials}). 
\begin{exa}
{\rm{(1)}} If $F(\mathbf{u})=1$, then $f_{\mathbf{m}}^{(d)}(\mathbf{z})$ is the Jack polynomial $\Phi_{\mathbf{m}}^{(d)}(\mathbf{z})$ exactly. \\
{\rm{(2)}} If 
$$
F(\mathbf{u})=\frac{|\mathbf{u}|}{e^{|\mathbf{u}|}-1},
$$
then $f_{\mathbf{m}}^{(d)}(\mathbf{z})$ is the multivariate Bernoulli polynomial $B_{\mathbf{m}}^{(d)}(\mathbf{z})$ (see \cite{Sh1}). 
\end{exa}

The binomial type polynomial satisfies the binomial formula. 
\begin{prop}
For any partition $\mathbf{m}$, we have 
\begin{align}
f_{\mathbf{m}}^{(d)}(\mathbf{1}+\mathbf{z})
   =
   \sum_{\mathbf{k} \subseteq \mathbf{m}}
      \frac{P_{\mathbf{k}}^{\mathrm{ip}}\left(\mathbf{m}+\frac{d}{2}\delta ;\frac{d}{2}\right)}{P_{\mathbf{k}}^{\mathrm{ip}}\left(\mathbf{k}+\frac{d}{2}\delta ;\frac{d}{2}\right)}f_{\mathbf{k}}^{(d)}(\mathbf{z}).
\end{align}
\end{prop}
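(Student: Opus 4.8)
The plan is to read the binomial formula directly off the defining generating function (\ref{eq:gen fnc of binomial type polynomials}), by exploiting a translation property of the kernel ${_{0}\mathcal{F}_0}^{(d)}\left( \mathbf{z},\mathbf{u}\right)$: the argument shift $\mathbf{z}\mapsto \mathbf{1}+\mathbf{z}$ should be interchangeable with multiplication by $e^{|\mathbf{u}|}$ in the dual variable. Once this is in hand, the coefficients produced by the Jack binomial formula (\ref{eq:raising binomial}) are exactly the ratios $P_{\mathbf{k}}^{\mathrm{ip}}\left(\mathbf{m}+\frac{d}{2}\delta;\frac{d}{2}\right)/P_{\mathbf{k}}^{\mathrm{ip}}\left(\mathbf{k}+\frac{d}{2}\delta;\frac{d}{2}\right)$, and a single comparison of coefficients of $\Psi_{\mathbf{m}}^{(d)}(\mathbf{u})$ settles the claim for every symmetric $F(\mathbf{u})$ simultaneously.

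First I would establish the first-order relation $|\partial_{\mathbf{z}}|\,{_{0}\mathcal{F}_0}^{(d)}\left( \mathbf{z},\mathbf{u}\right)=|\mathbf{u}|\,{_{0}\mathcal{F}_0}^{(d)}\left( \mathbf{z},\mathbf{u}\right)$. This is precisely the $l=1$ case of Lemma \ref{thm:intertwining rel}: since $(\ad{|\partial_{\mathbf{z}}|})H_{r,1}^{(d)}(\mathbf{z})=\frac{2}{d}|\partial_{\mathbf{z}}|$ and $e_{r,1}(\mathbf{u})=|\mathbf{u}|$, the relation (\ref{eq:0F0 kernel rel}) collapses to exactly this identity. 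Because $|\mathbf{u}|$ is constant in $\mathbf{z}$, iterating gives $|\partial_{\mathbf{z}}|^{n}\,{_{0}\mathcal{F}_0}^{(d)}\left( \mathbf{z},\mathbf{u}\right)=|\mathbf{u}|^{n}\,{_{0}\mathcal{F}_0}^{(d)}\left( \mathbf{z},\mathbf{u}\right)$ for all $n$, and since $e^{|\partial_{\mathbf{z}}|}$ is the translation $g(\mathbf{z})\mapsto g(\mathbf{1}+\mathbf{z})$, exponentiating yields the translation property
\[
{_{0}\mathcal{F}_0}^{(d)}\left( \mathbf{1}+\mathbf{z},\mathbf{u}\right)=e^{|\mathbf{u}|}\,{_{0}\mathcal{F}_0}^{(d)}\left( \mathbf{z},\mathbf{u}\right).
\]

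Next I would substitute $\mathbf{z}\mapsto\mathbf{1}+\mathbf{z}$ into (\ref{eq:gen fnc of binomial type polynomials}). The left-hand side becomes $\sum_{\mathbf{m}\in\mathcal{P}}f_{\mathbf{m}}^{(d)}(\mathbf{1}+\mathbf{z})\Psi_{\mathbf{m}}^{(d)}(\mathbf{u})$, while by the translation property the right-hand side equals $e^{|\mathbf{u}|}\sum_{\mathbf{k}\in\mathcal{P}}f_{\mathbf{k}}^{(d)}(\mathbf{z})\Psi_{\mathbf{k}}^{(d)}(\mathbf{u})$. Applying (\ref{eq:raising binomial}) to each factor $e^{|\mathbf{u}|}\Psi_{\mathbf{k}}^{(d)}(\mathbf{u})$ and interchanging the two summations turns the latter into
\[
\sum_{\mathbf{m}\in\mathcal{P}}\left(\sum_{\mathbf{k}\subseteq\mathbf{m}}\frac{P_{\mathbf{k}}^{\mathrm{ip}}\left(\mathbf{m}+\frac{d}{2}\delta;\frac{d}{2}\right)}{P_{\mathbf{k}}^{\mathrm{ip}}\left(\mathbf{k}+\frac{d}{2}\delta;\frac{d}{2}\right)}f_{\mathbf{k}}^{(d)}(\mathbf{z})\right)\Psi_{\mathbf{m}}^{(d)}(\mathbf{u}).
\]
Comparing the coefficients of $\Psi_{\mathbf{m}}^{(d)}(\mathbf{u})$, which form a linearly independent family, then yields the asserted identity.

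The main obstacle is analytic rather than combinatorial. Every manipulation above takes place inside the infinite series defining ${_{0}\mathcal{F}_0}^{(d)}$, so I must justify termwise differentiation, the exponentiation $e^{|\partial_{\mathbf{z}}|}$ acting as a Taylor shift on the series, and the interchange of the order of summation. These operations are legitimate on the domain $|u_{1}|+\cdots+|u_{r}|\ll 1$ where (\ref{eq:gen fnc of binomial type polynomials}) converges absolutely, and the bookkeeping should record that the reindexing forced by (\ref{eq:raising binomial}) introduces no spurious boundary terms, since contributions with $\mathbf{k}\not\subseteq\mathbf{m}$ simply never arise; this guarantees that the final coefficient comparison is valid as an identity of convergent power series.
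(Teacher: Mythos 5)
Your proposal is correct and follows essentially the same route as the paper: both rest on the index law ${_{0}\mathcal{F}_0}^{(d)}\left(\mathbf{1}+\mathbf{z},\mathbf{u}\right)=e^{|\mathbf{u}|}\,{_{0}\mathcal{F}_0}^{(d)}\left(\mathbf{z},\mathbf{u}\right)$ applied to the generating function (\ref{eq:gen fnc of binomial type polynomials}), followed by the binomial formula (\ref{eq:raising binomial}) and comparison of coefficients of $\Psi_{\mathbf{m}}^{(d)}(\mathbf{u})$. The only difference is that you derive the index law from the $l=1$ case of Lemma \ref{thm:intertwining rel}, whereas the paper simply cites it; this is a harmless (indeed clarifying) addition.
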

\begin{proof}
It follows from the index law 
$$
{_{0}\mathcal{F}_0}^{(d)}\left( \mathbf{1}+\mathbf{z},\mathbf{u}\right)
   =
   e^{|\mathbf{u}|}{_{0}\mathcal{F}_0}^{(d)}\left( \mathbf{z},\mathbf{u}\right)
$$
and binomial formula (\ref{eq:raising binomial}). 
In fact 
\begin{align}
\sum_{\mathbf{m} \in \mathcal{P}}
   \Psi _{\mathbf{m}}^{(d)}(\mathbf{u})
   f_{\mathbf{m}}^{(d)}\left(\mathbf{1}+\mathbf{z}\right)
   &=
   {_{0}\mathcal{F}_0}^{(d)}\left( \mathbf{1}+\mathbf{z},\mathbf{u}\right)F(\mathbf{u}) \nonumber \\
   &=
   e^{|\mathbf{u}|}{_{0}\mathcal{F}_0}^{(d)}\left( \mathbf{z},\mathbf{u}\right)F(\mathbf{u}) \nonumber \\
   &=
   \sum_{\mathbf{k} \in \mathcal{P}}
   f_{\mathbf{k}}^{(d)}\left(\mathbf{z}\right)
   e^{|\mathbf{u}|}\Psi _{\mathbf{k}}^{(d)}(\mathbf{u}) \nonumber \\
   &=
   \sum_{\mathbf{m} \in \mathcal{P}}
   \Psi _{\mathbf{m}}^{(d)}(\mathbf{u})
   \sum_{\mathbf{k} \subseteq \mathbf{m}}
   \frac{P_{\mathbf{k}}^{\mathrm{ip}}\left(\mathbf{x}+\frac{d}{2}\delta ;\frac{d}{2}\right)}{P_{\mathbf{k}}^{\mathrm{ip}}\left(\mathbf{k}+\frac{d}{2}\delta ;\frac{d}{2}\right)}f_{\mathbf{k}}^{(d)}(\mathbf{z}). \nonumber 
\end{align}
\end{proof}
Finally, we derive a twisted Pieri formula for binomial type polynomials $f_{\mathbf{m}}^{(d)}(\mathbf{z})$ that is a generalization of the twisted Pieri formula (\ref{eq:Phikey lemma 2}).
\begin{thm}
For any $\mathbf{z} \in \mathbb{C}^{r}$ and $l=0,1,\ldots, r$, we have 
\begin{align}
\left(\frac{d}{2}\right)^{l}
\left[\frac{(\ad{|\partial_{\mathbf{z}}|})^{l}}{l!}H_{r,l}^{(d)}(\mathbf{z})\right]f_{\mathbf{m}}^{(d)}(\mathbf{z})
   &=
      \sum_{\substack{J \subseteq [r], |J|=l, \\ \mathbf{m}-\epsilon_{J} \in \mathcal{P}}}
      f_{\mathbf{m}-\epsilon_{J}}^{(d)}(\mathbf{z})
      A_{-,J}^{(d)}(\mathbf{m}) \nonumber \\
   & \quad \quad \quad \cdot 
      \prod_{j \in J}\left(m_{j}+\frac{d}{2}(r-j)\right). 
\end{align}
\end{thm}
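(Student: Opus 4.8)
The plan is to repeat, almost verbatim, the argument proving Lemma~\ref{thm:Jack Psi Pieri}, but with the bare kernel $_{0}\mathcal{F}_0^{(d)}(\mathbf{z},\mathbf{u})$ replaced by the generating series $_{0}\mathcal{F}_0^{(d)}(\mathbf{z},\mathbf{u})F(\mathbf{u})$ of (\ref{eq:gen fnc of binomial type polynomials}), exploiting that the twisted operator differentiates only in $\mathbf{z}$.

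First I would apply the operator $\left(\frac{d}{2}\right)^{l}\frac{(\ad|\partial_{\mathbf{z}}|)^{l}}{l!}H_{r,l}^{(d)}(\mathbf{z})$ to both sides of (\ref{eq:gen fnc of binomial type polynomials}). On the left, since the operator involves only $\mathbf{z}$-derivatives it leaves $F(\mathbf{u})$ untouched, and the intertwining relation (\ref{eq:0F0 kernel rel}) converts its action on the kernel into multiplication by $e_{r,l}(\mathbf{u})$. On the right, the operator passes through each factor $\Psi_{\mathbf{m}}^{(d)}(\mathbf{u})$ and lands on the coefficients $f_{\mathbf{m}}^{(d)}(\mathbf{z})$. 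This yields
$$
\sum_{\mathbf{m}\in\mathcal{P}}\left[\left(\tfrac{d}{2}\right)^{l}\tfrac{(\ad|\partial_{\mathbf{z}}|)^{l}}{l!}H_{r,l}^{(d)}(\mathbf{z})f_{\mathbf{m}}^{(d)}(\mathbf{z})\right]\Psi_{\mathbf{m}}^{(d)}(\mathbf{u})=e_{r,l}(\mathbf{u})\,{_{0}\mathcal{F}_0}^{(d)}(\mathbf{z},\mathbf{u})F(\mathbf{u}).
$$

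Next I would expand the right-hand side once more via (\ref{eq:gen fnc of binomial type polynomials}) and apply the Pieri formula (\ref{eq:Jack Psi Pieri}) in the $\mathbf{u}$-variables to each product $e_{r,l}(\mathbf{u})\Psi_{\mathbf{m}}^{(d)}(\mathbf{u})$. Relabeling the summation index so that $\Psi_{\mathbf{m}+\epsilon_{J}}^{(d)}(\mathbf{u})$ is renamed $\Psi_{\mathbf{m}}^{(d)}(\mathbf{u})$ (i.e.\ replacing $\mathbf{m}$ by $\mathbf{m}-\epsilon_{J}$ in the coefficients), the factor $m_{j}+1+\frac{d}{2}(r-j)$ in (\ref{eq:Jack Psi Pieri}) collapses to $m_{j}+\frac{d}{2}(r-j)$ and $A_{-,J}^{(d)}(\mathbf{m}+\epsilon_{J})$ becomes $A_{-,J}^{(d)}(\mathbf{m})$, while $f_{\mathbf{m}}^{(d)}$ becomes $f_{\mathbf{m}-\epsilon_{J}}^{(d)}$ and the constraint $\mathbf{m}+\epsilon_{J}\in\mathcal{P}$ turns into $\mathbf{m}-\epsilon_{J}\in\mathcal{P}$. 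Comparing the coefficients of $\Psi_{\mathbf{m}}^{(d)}(\mathbf{u})$ on the two sides --- legitimate because the $\Psi_{\mathbf{m}}^{(d)}$ are linearly independent --- gives precisely the asserted identity.

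The computation is essentially bookkeeping, so the only real points needing care are the index shift $\mathbf{m}\leftrightarrow\mathbf{m}-\epsilon_{J}$ (where the $+1$ in the product and the shift in the argument of $A_{-,J}^{(d)}$ must cancel exactly as above) and the analytic justification: that the twisted differential-difference operator may be applied term by term to the absolutely convergent series (\ref{eq:gen fnc of binomial type polynomials}) and that coefficient comparison in the $\Psi$-expansion is valid. Both are routine given the convergence hypothesis on $F$ and the basis property of the $\Psi_{\mathbf{m}}^{(d)}$.
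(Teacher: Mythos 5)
Your argument is correct and is essentially identical to the paper's own proof: apply the twisted operator to the generating series, use the intertwining relation (\ref{eq:0F0 kernel rel}) to turn it into multiplication by $e_{r,l}(\mathbf{u})$, re-expand, apply the Pieri formula (\ref{eq:Jack Psi Pieri}) in the $\mathbf{u}$-variables, shift the index $\mathbf{m}\mapsto\mathbf{m}-\epsilon_{J}$, and compare coefficients of $\Psi_{\mathbf{m}}^{(d)}(\mathbf{u})$. The index-shift bookkeeping you flag (the $+1$ cancelling and $A_{-,J}^{(d)}(\mathbf{m}+\epsilon_{J})$ becoming $A_{-,J}^{(d)}(\mathbf{m})$) is exactly what the paper does in its final line.
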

\begin{proof}
From (\ref{eq:0F0 kernel rel}) and (\ref{eq:Jack Psi Pieri}), we have 
\begin{align}
& \sum_{\mathbf{m} \in \mathcal{P}}
   \left(\frac{d}{2}\right)^{l}
   \left[\frac{(\ad{|\partial_{\mathbf{z}}|})^{l}}{l!}H_{r,l}^{(d)}(\mathbf{z})\right]
      f_{\mathbf{m}}^{(d)}\left(\mathbf{z}\right)
      \Psi _{\mathbf{m}}^{(d)}(\mathbf{u}) \nonumber \\
   & \quad = 
   \left(\frac{d}{2}\right)^{l}
   \left[\frac{(\ad{|\partial_{\mathbf{z}}|})^{l}}{l!}H_{r,l}^{(d)}(\mathbf{z})\right]
   {_{0}\mathcal{F}_0}^{(d)}\left( \mathbf{z},\mathbf{u}\right)F(\mathbf{u}) \nonumber \\
   & \quad =
   e_{r,l}(\mathbf{u}){_{0}\mathcal{F}_0}^{(d)}\left( \mathbf{z},\mathbf{u}\right)F(\mathbf{u}) \nonumber \\
   & \quad =
   \sum_{\mathbf{m} \in \mathcal{P}}
      f_{\mathbf{m}}^{(d)}\left(\mathbf{z}\right)
      e_{r,l}(\mathbf{u})\Psi _{\mathbf{m}}^{(d)}(\mathbf{u}) \nonumber \\
   & \quad =
   \sum_{\mathbf{m} \in \mathcal{P}}
      f_{\mathbf{m}}^{(d)}\left(\mathbf{z}\right)
   \sum_{\substack{J \subseteq [r], |J|=l, \\ \mathbf{m}+\epsilon_{J} \in \mathcal{P}}}
      \Psi_{\mathbf{m}+\epsilon_{J}}^{(d)}(\mathbf{z})
      A_{-,J}^{(d)}(\mathbf{m}+\epsilon_{J})
      \prod_{i \in J}\left(m_{j}+1+\frac{d}{2}(r-j)\right) \nonumber \\
   & \quad =
   \sum_{\mathbf{m} \in \mathcal{P}}
      \Psi_{\mathbf{m}}^{(d)}(\mathbf{z})
      \sum_{\substack{J \subseteq [r], |J|=l, \\ \mathbf{m}-\epsilon_{J} \in \mathcal{P}}}
      f_{\mathbf{m}-\epsilon_{J}}^{(d)}(\mathbf{z})
      A_{-,J}^{(d)}(\mathbf{m})
      \prod_{j \in J}\left(m_{j}+\frac{d}{2}(r-j)\right). \nonumber
\end{align}
\end{proof}

\section*{Acknowledgement}
This work was supported by Grant-in-Aid for JSPS Fellows (Number 18J00233).


\end{document}